\documentclass[11pt, reqno]{amsart}

\usepackage[utf8]{inputenc}
\usepackage{indentfirst}
\usepackage{amsmath, amssymb, amsthm}
\usepackage{mathrsfs}
\usepackage{setspace}
\usepackage{enumerate}
\usepackage{wasysym}
\usepackage{graphicx}
\usepackage{tikz}
\usepackage{float}
\usepackage{booktabs}
\usepackage{pgfplots}
\usepackage{xcolor}
\usepackage[colorlinks=true,linkcolor=purple,citecolor=blue,urlcolor=magenta]{hyperref}

\definecolor{sectionlink}{RGB}{0,100,200}

\newtheorem{theo}{Theorem}[section]
\newtheorem{lem}{Lemma}[section]

\newtheorem{rem}{Remark}[section]

\newcommand{\ol}{\overline}

\numberwithin{equation}{section}
\begin{document}
\title[Sharp coefficient Estimates for the class $\mathcal{S}_{\mathcal{AP}}^{*}$]{Sharp coefficient Estimates for the class $\mathcal{S}_{\mathcal{AP}}^{*}$}
\author[P. Das, N. Sarkar]{Pradip Das, Nabadwip Sarkar}
\address{Department of Mathematics, Raiganj University, Raiganj, West Bengal-733134, India.}
\email{pradipsmath@gmail.com}
\address{Amity School of Applied Sciences, Amity University Mumbai, Panvel, Navi Mumbai, Maharashtra-410206, India.}
\email{naba.iitbmath@gmail.com}

\makeatletter
\@namedef{subjclassname@2020}{\textup{2020} Mathematics Subject Classification}
\makeatother

\subjclass[2020]{Primary 30C45; Secondary 30C50, 30C55, 30C80.}
\keywords{Univalent functions, inverse logarithmic coefficients, inverse logarithmic Hankel determinants, Fekete Szeg\"o inequalities, Hermitian Toeplitz determinants, Ma Minda subclasses, apple-like domains.}

\renewcommand{\thefootnote}{\arabic{footnote}}
\setcounter{footnote}{0}
\begin{abstract}
We investigate several classic coefficient problems for the Ma--Minda starlike subclass $\mathcal{S}_{\mathcal{AP}}^{*}$ defined by the apple-like subordination function $\psi_{\mathcal{AP}}(z)=e^{z}\sqrt{1+z}$. Sharp bounds are derived for the initial inverse logarithmic coefficients $\Gamma_1$, $\Gamma_2$, $\Gamma_3$, and the successive modulus difference $|\Gamma_2|-|\Gamma_1|$. In addition, we evaluate the second-order inverse logarithmic Hankel determinant, the generalized Fekete--Szeg\"o functional over all real parameter domains, and the third-order Hermitian--Toeplitz determinant. The corresponding extremal functions are explicitly determined for each functional.
\end{abstract}
\maketitle
\section{{\bf Introduction}}
Let $\mathcal{H}$ denote the class of analytic functions defined in the open unit disk $\mathbb{D}:=\{z\in\mathbb{C}:|z|<1\}$, equipped with the topology of uniform convergence on compact subsets. We denote by $\mathcal{A}$ the subclass of $\mathcal{H}$ consisting of functions normalized by $f(0)=0$ and $f'(0)=1$. Let $\mathcal{S}$ be the family of functions $f\in\mathcal{A}$ that are univalent in $\mathbb{D}$. Every function $f\in\mathcal{S}$ can be expressed by the Taylor series expansion
\begin{equation}\label{eq1}
f(z)=z+\sum_{n=2}^{\infty}a_nz^n, \quad z\in\mathbb{D}.
\end{equation}

A function $f\in\mathcal{A}$ is starlike if the image domain $f(\mathbb{D})$ is starlike with respect to the origin, and is convex if $f(\mathbb{D})$ is a convex domain. These standard subclasses of $\mathcal{S}$ are denoted by $\mathcal{S}^{\ast}$ and $\mathcal{C}$, respectively. Analytically, a function $f\in\mathcal{A}$ belongs to $\mathcal{S}^{\ast}$ if and only if $\Re\left(zf'(z)/f(z)\right)>0$ for all $z\in\mathbb{D}$, and belongs to $\mathcal{C}$ if and only if $\Re\left(1+zf''(z)/f'(z)\right)>0$ for all $z\in\mathbb{D}$. The classical Alexander relation states that $f\in\mathcal{C}$ if and only if $zf'\in\mathcal{S}^{\ast}$.

Recall that an analytic function $f$ is subordinate to an analytic function $g$ in $\mathbb{D}$, written as $f\prec g$, if there exists a Schwarz function $\omega$ (analytic in $\mathbb{D}$ with $\omega(0)=0$ and $|\omega(z)|<1$) such that $f(z)=g(\omega(z))$ for $z\in\mathbb{D}$. In particular, if $g$ is univalent in $\mathbb{D}$, then $f\prec g$ is equivalent to $f(0)=g(0)$ and $f(\mathbb{D})\subset g(\mathbb{D})$.

Using the principle of subordination, Ma and Minda \cite{MM} introduced a unified framework for subclasses of starlike functions, defined by
\[ 
\mathcal{S}^{\ast}(\psi) = \left\{ f\in\mathcal{S}: \frac{zf'(z)}{f(z)} \prec \psi(z), \quad z\in\mathbb{D}\right\},
\] 
where $\psi$ is an analytic and univalent function mapping $\mathbb{D}$ onto a domain that is symmetric with respect to the real axis, with $\Re(\psi(z))>0$, $\psi(0)=1$, and $\psi'(0)>0$.

Various choices for the superordinate function $\psi$ have been studied extensively \cite{Arora2019, Arora2023, CK, STZ, rendi, Sm2026, periodica, TAA} to model particular geometric configurations and coefficient behavior. Recently, Sana, Saliu, and Riaz \cite{SanaSaliuRiaz} introduced the subclass $\mathcal{S}_{\mathcal{AP}}^{\ast}$ associated with the apple-like generating function $\psi_{\mathcal{AP}}(z)=e^{z}\sqrt{1+z}$. A function $f\in\mathcal{A}$ belongs to this class if 
\[\frac{zf'(z)}{f(z)}\prec\psi_{\mathcal{AP}}(z), \quad z\in\mathbb{D}.\]
The function $\psi_{\mathcal{AP}}$ maps the unit disk onto a symmetric, apple-like region and satisfies the standard Ma--Minda conditions, positioning $\mathcal{S}_{\mathcal{AP}}^{\ast}$ as a structural subclass of the broader Ma--Minda starlike family.

\begin{figure}[H]
\centering
\begin{tikzpicture}[scale=0.7, >=stealth]
\begin{scope}[shift={(-2.8,0)}]
    \draw[thick, fill=blue!5] (0,0) circle (2);
    \draw[->, gray, thin] (-2.5,0) -- (2.5,0) node[right, black] {\tiny $\Re$};
    \draw[->, gray, thin] (0,-2.5) -- (0,2.5) node[above, black] {\tiny $\Im$};
    \foreach \a in {0,45,...,315} {
        \draw[dashed, blue!40, thin] (0,0) -- (\a:2);
    }
    \filldraw[black] (0,0) circle (1.5pt);
    \node at (0,-2.8) {\small $\mathbb{D}$};
\end{scope}

\draw[->, thick, bend left=20] (-0.6,0.6) to node[above=1pt] {\small $\psi_{\mathcal{AP}}(z)$} (0.6,0.6);

\begin{scope}[shift={(2.8,0)}]
    \draw[thick, fill=green!5, smooth cycle, samples=200, variable=\t, domain=0:360]
    plot ({ (1.45 + 0.35*cos(\t) + 0.1*sin(\t)*sin(\t))*cos(\t) }, { (1.35 + 0.15*cos(\t))*sin(\t) });
    \draw[->, gray, thin] (-2.5,0) -- (3.2,0) node[right, black] {\tiny $\Re$};
    \draw[->, gray, thin] (0,-2.5) -- (0,2.5) node[above, black] {\tiny $\Im$};
    \foreach \a in {0,45,...,315} {
        \draw[dashed, green!60, thin] (0,0) -- ({ (1.45 + 0.35*cos(\a) + 0.1*sin(\a)*sin(\a))*cos(\a) }, { (1.35 + 0.15*cos(\a))*sin(\a) });
    }
    \filldraw[black] (1,0) circle (1.2pt) node[below right] {\tiny $1$};
    \filldraw[black] (0,0) circle (1.5pt);
    \node at (0,-2.8) {\small $\psi_{\mathcal{AP}}(\mathbb{D})$};
\end{scope}
\end{tikzpicture}
\caption{Conformal mapping of the open unit disk $\mathbb{D}$ onto the symmetric apple-like Ma--Minda target domain under the generating boundary function $\psi_{\mathcal{AP}}(z) = e^z\sqrt{1+z}$.}
\label{fig:apple_domain}
\end{figure}

The primary objective of this paper is to investigate several extremal problems within the class $\mathcal{S}_{\mathcal{AP}}^{\ast}$. Specifically, we study its inverse logarithmic coefficients, consecutive coefficient differences, and structural determinants to map the geometric bounds imposed by this target domain.

\subsection{Inverse Logarithmic Coefficients}
For a function $f\in\mathcal{S}$, the inverse logarithmic coefficients $\Gamma_n$ are defined by Ponnusamy et al.~\cite{Ponnusamy2018} via the log-expansion of the inverse function $f^{-1}$:
\[
F_{f^{-1}}(w):= \log\frac{f^{-1}(w)}{w} = 2\sum_{n=1}^{\infty}\Gamma_n w^n, \quad |w|<\frac{1}{4}.
\]
By explicit computation, the first four coefficients are related to the Taylor coefficients of $f$ by
\begin{equation}\label{IG1}
\begin{cases}
\Gamma_1 = -\frac{1}{2} a_2, \\
\Gamma_2 = -\frac{1}{2} a_3 + \frac{3}{4} a_2^2, \\
\Gamma_3 = -\frac{1}{2} \left( a_4 - 4a_2 a_3 + \frac{10}{3} a_2^3 \right), \\
\Gamma_4 = \frac{35}{8} a_2^4 - \frac{15}{2} a_2^2 a_3 + \frac{5}{2}a_2 a_4 + \frac{5}{4} a_3^2 - \frac{1}{2}a_5.
\end{cases}
\end{equation}
Ponnusamy et al.~\cite{Ponnusamy2018} proved that for the full class $\mathcal{S}$, the sharp inequality $|\Gamma_n| \le \frac{1}{2n}\binom{2n}{n}$ holds for all $n\in\mathbb{N}$, with equality if and only if $f$ is the Koebe function or one of its rotations. 

Determinants whose entries are logarithmic coefficients provide useful structural properties of univalent functions. Kowalczyk and Lecko \cite{12,15} studied the classical Hankel determinant $H_{q,n}(F_f/2)$ for logarithmic coefficients. Its inverse analogue, $H_{q,n}(F_{f^{-1}}/2)$, utilizes the inverse logarithmic coefficients $\Gamma_n$ as entries (see \cite{10,16}):
\[
H_{q,n}\left(F_{f^{-1}}/2\right)
=
\begin{vmatrix}
\Gamma_n & \Gamma_{n+1} & \cdots & \Gamma_{n+q-1} \\
\Gamma_{n+1} & \Gamma_{n+2} & \cdots & \Gamma_{n+q} \\
\vdots & \vdots & \ddots & \vdots \\
\Gamma_{n+q-1} & \Gamma_{n+q} & \cdots & \Gamma_{n+2(q-1)}
\end{vmatrix}.
\]

While the standard Taylor and logarithmic coefficients for functions associated with the apple-like domain have recently been examined by Sana et al.~\cite{SanaSaliuRiaz}, several corresponding problems for the inverse configurations remain open. To bridge this gap, this paper establishes the following sharp results for the class $\mathcal{S}_{\mathcal{AP}}^{\ast}$:
\begin{itemize}
\item Upper bounds for the initial inverse logarithmic coefficients $\Gamma_1$, $\Gamma_2$, and $\Gamma_3$.
\item Estimates for the second-order inverse logarithmic Hankel determinant $H_{2,1}(F_{f^{-1}}/2) = \Gamma_1\Gamma_3-\Gamma_2^2$.
\item Upper and lower bounds for the consecutive coefficient difference $|\Gamma_2|-|\Gamma_1|$.
\item Sharp bounds for the generalized Fekete--Szeg\"o functional $|a_3 - \lambda a_2^2| - \mu |a_2|$ over all real parameters $\lambda$ and $\mu > 0$.
\item Sharp bounds for the third-order Hermitian--Toeplitz determinant $T_{3,1}(f)$.
\end{itemize}
\section{{\bf Auxiliary lemmas}}
Let $\mathcal{P}$ denote the class of all analytic functions $p$ in the unit disk $\mathbb{D}$ satisfying 
$p(0) = 1$ and $\Re p(z) > 0$ for all $z \in \mathbb{D}$. 
Then, every $p \in \mathcal{P}$ admits the series representation
\begin{equation}\label{p1}
p(z) = 1 + \sum_{n=1}^{\infty} c_n z^n, \quad z \in \mathbb{D}.
\end{equation}
Functions in $\mathcal{P}$ are referred to as \emph{Carath\'{e}odory functions}. It is well-known that for $p \in \mathcal{P}$, the coefficients satisfy the sharp bound $|c_n| \leq 2$ for all $n \geq 1$ (see \cite{PLD1}). 
The Carath\'{e}odory class $\mathcal{P}$ and its coefficient bounds play a fundamental role 
in deriving estimates for sharp bounds in geometric function theory.\par 
Now, we state some lemmas, which will be useful to establish our main results:
Now we recall the following well-known result due to Cho et al. \cite{C12}.
\begin{lem}\label{L1} \cite[Lemma 2.4]{C12} If $p\in\mathcal{P}$ is of the form (\ref{p1}), then
\begin{equation}\label{c1}c_1 =2\tau_1,\end{equation}
\begin{equation}\label{c2} c_2=2\tau_1^2 + 2(1 - |\tau_1|^2)\tau_2\end{equation}
and
\begin{equation}\label{c3} c_3 = 2\tau_1^3+4(1-|\tau_1|^2)\tau_1\tau_2 - 2(1 - |\tau_1|^2)\ol{\tau_1}\tau_2^2 + 2(1 - \tau_1^2)(1 - |\tau_2|^2)\tau_3
\end{equation}
for some $\tau_1, \tau_2, \tau_3 \in\mathbb{\ol D}:= \{z \in \mathbb{C}: |z| \leq 1 \}$.
For $ \tau_1 \in \mathbb{T} := \{ z \in \mathbb{C} : |z| = 1 \} $ , there is a unique function $ p \in \mathcal{P} $ with $ c_1 $ as in (\ref{c1}), namely,
\[
p(z) = \frac{1 + \tau_1 z}{1 - \tau_1 z}, \quad z \in \mathbb{D}.
\]
For $ \tau_1 \in \mathbb{D} $ and $ \tau_2 \in \mathbb{T} $ , there is a unique function $ p \in \mathcal{P} $ with $ c_1 $ and $ c_2 $ as in (\ref{c1}) and (\ref{c2}), namely,
\[
p(z) = \frac{1 + (\ol \tau_1 \tau_2 + \tau_1) z + \tau_2 z^2}{1 + (\ol \tau_1 \tau_2 - \tau_1) z - \tau_2 z^2}, \quad z \in \mathbb{D}.
\]
For $ \tau_1, \tau_2 \in \mathbb{D} $ and $ \tau_3 \in \mathbb{T} $ , there is a unique function $ p \in \mathcal{P} $ with $ c_1 $ , $ c_2 $ , and $ c_3 $ as in (\ref{c1}-\ref{c3}), namely,
\[
p(z) = \frac{1 + (\ol\tau_2 \tau_3 + \ol\tau_1 \tau_2 + \tau_1)z+(\ol\tau_1\tau_3+\tau_1\ol\tau_2\tau_3+\tau_2)z^2+\tau_3z^3}{1+(\ol\tau_2\tau_3+\ol\tau_1\tau_2-\tau_1)z+(\ol\tau_1\tau_3-\tau_1\ol\tau_2\tau_3-\tau_2)z^2-\tau_3z^3},\;\;z\in\mathbb{D}
\]
\end{lem}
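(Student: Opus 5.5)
The plan is to go through the Schur algorithm applied to the Schwarz function attached to $p$. Given $p\in\mathcal{P}$ of the form (\ref{p1}), put $\omega(z)=\frac{p(z)-1}{p(z)+1}$. Then $\omega$ is analytic in $\mathbb{D}$, $\omega(0)=0$ and $|\omega|<1$, and $p=\frac{1+\omega}{1-\omega}$; conversely every such $\omega$ gives some $p\in\mathcal{P}$.

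The Schwarz lemma gives $\omega(z)=z\,\omega_1(z)$ with $\omega_1:\mathbb{D}\to\overline{\mathbb{D}}$ analytic, and I set $\tau_1:=\omega_1(0)$. If $|\tau_1|<1$, the disc automorphism $\frac{\omega_1-\tau_1}{1-\overline{\tau_1}\omega_1}$ vanishes at $0$. Hence
\[
\omega_1(z)=\frac{\tau_1+z\,\omega_2(z)}{1+\overline{\tau_1}\,z\,\omega_2(z)}
\]
with $\omega_2$ again a self-map of $\overline{\mathbb{D}}$, and I set $\tau_2:=\omega_2(0)$. Repeating once more gives $\omega_2=\frac{\tau_2+z\omega_3}{1+\overline{\tau_2}z\omega_3}$ with $\tau_3:=\omega_3(0)\in\overline{\mathbb{D}}$. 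In the degenerate cases ($|\tau_1|=1$ or $|\tau_2|=1$) the maximum principle forces $\omega_1$, respectively $\omega_2$, to be a unimodular constant. The later parameters can then be chosen arbitrarily in $\overline{\mathbb{D}}$, since they appear multiplied by the vanishing factors $1-|\tau_1|^2$ or $1-|\tau_2|^2$.

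Next I expand everything in powers of $z$. First, $\omega_1=\tau_1+(1-|\tau_1|^2)\tau_2 z+\big[(1-|\tau_1|^2)(1-|\tau_2|^2)\tau_3-(1-|\tau_1|^2)\overline{\tau_1}\tau_2^2\big]z^2+O(z^3)$. This follows from $\frac{\tau+u}{1+\bar\tau u}=\tau+(1-|\tau|^2)(u-\bar\tau u^2+\dots)$, applied twice with $u=z\omega_2$ and then $u=z\omega_3$. So $\omega=w_1z+w_2z^2+w_3z^3+\dots$ with $w_1=\tau_1$, $w_2=(1-|\tau_1|^2)\tau_2$, and $w_3$ the bracket above.

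Then $p=1+2\omega+2\omega^2+2\omega^3+\dots$ gives $c_1=2w_1$, $c_2=2w_2+2w_1^2$ and $c_3=2w_3+4w_1w_2+2w_1^3$. Substituting yields (\ref{c1}) and (\ref{c2}) at once. For (\ref{c3}) the coefficient of $\tau_3$ comes out as $2(1-|\tau_1|^2)(1-|\tau_2|^2)$, so the factor written $(1-\tau_1^2)$ in the statement should be read as $(1-|\tau_1|^2)$. This bookkeeping in the third coefficient is the only place where care is needed, and I expect it to be the main computational step.

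For the uniqueness assertions I use the same recursion. If $\tau_1\in\mathbb{T}$, then $|\omega_1(0)|=1$, so $\omega_1\equiv\tau_1$ and $p=\frac{1+\tau_1z}{1-\tau_1z}$. If $\tau_1\in\mathbb{D}$ and $\tau_2\in\mathbb{T}$, then $c_1,c_2$ determine $\tau_1,\tau_2$, since $1-|\tau_1|^2\neq0$. Then $\omega_2\equiv\tau_2$ is forced, so $\omega=z\frac{\tau_1+\tau_2z}{1+\overline{\tau_1}\tau_2z}$, and $\frac{1+\omega}{1-\omega}$ simplifies to the displayed rational function. The third case is identical: $\omega_3\equiv\tau_3$, and after composing the two Möbius steps, clearing denominators in $\frac{1+\omega}{1-\omega}$ gives the stated quotient of cubics. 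The remaining work is to verify that algebraic simplification term by term.
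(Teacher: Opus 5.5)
Your proof is correct. The Schur recursion gives exactly the stated $w_1,w_2,w_3$, and the relations $c_1=2w_1$, $c_2=2w_2+2w_1^2$, $c_3=2w_3+4w_1w_2+2w_1^3$ reproduce the three formulas. Clearing denominators in $(1+\omega)/(1-\omega)$ with $\omega_2\equiv\tau_2$, respectively $\omega_3\equiv\tau_3$, gives the displayed quadratic and cubic quotients term by term. The uniqueness argument is also sound: the prescribed $c_j$ force the Schur parameters, and a unimodular parameter forces the corresponding $\omega_j$ to be constant.

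The paper itself gives no proof of this lemma; it simply quotes it from Cho et al. Your argument therefore supplies the standard self-contained derivation behind it. In that derivation the $\tau_j$ are the Schur parameters of $\omega(z)/z$, which is Carlson's description of the first coefficients of a bounded function.

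You are also right that the factor $(1-\tau_1^2)$ in the formula for $c_3$ must be read as $(1-|\tau_1|^2)$, as in the original source.

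\begin{itemize}
\item As a pure existence statement, the misprinted version would still be true: since $|1-\tau_1^2|\ge 1-|\tau_1|^2$, one can rescale $\tau_3$.
\item The third uniqueness assertion, however, would fail for non-real $\tau_1\in\mathbb{D}$. A $p\in\mathcal{P}$ with that value of $c_3$ would need Schur parameter $\frac{1-\tau_1^2}{1-|\tau_1|^2}\tau_3$. This has modulus strictly greater than $1$ when $|\tau_3|=1$, so no such $p$ exists.
\end{itemize}

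In the paper the lemma is only applied after rotating to $\tau_1\in[0,1]$, where the two readings coincide.
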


\medskip
Following well-known result is due to Choi et al. \cite{CKS1}.
\begin{lem}\label{L2}\cite{CKS1} Let $A$, $B$, $C$ be real numbers and let
\[Y(A, B, C):= \max\limits_{z\in \ol{\mathbb{D}}}\left\lbrace |A+Bz+Cz^2|+1-|z|^2\right\rbrace.\]
\begin{enumerate} 
\item[(i)] If $AC\geq 0$, then
\[Y(A, B, C) =
\begin{cases}
|A|+|B|+|C|, & \text{if}\;\;\; |B|\geq 2(1-|C|), \\
1+|A|+\frac{B^2}{4(1-|C|)}, &\text{if}\;\;\; |B|<2(1-|C|).
\end{cases}
\]
\item[(ii)] If $AC<0$, then 
\[Y(A,B,C)=
\begin{cases}
1-|A|+\frac{B^2}{4(1-|C|)}, &\text{if}\;\;\; -4AC(C^{-2}-1) \leq B^2\; \text{and}\; |B|<2(1-|C|), \\
1+|A|+\frac{B^2}{4(1+|C|)}, &\text{if}\;\;\; B^2<\min\left\{4(1+|C|)^2, -4AC(C^{-2}-1) \right\}, \\
R(A,B,C), &\text{otherwise},
\end{cases}
\]
where
\[R(A,B,C):=
\begin{cases}
|A|+|B|-|C|, & \text{if}\;\;\; |C|(|B|+4|A|) \leq |AB|, \\
-|A|+|B|+|C|, & \text{if}\;\;\; |AB|\leq |C|(|B|-4|A|), \\
(|C|+|A| )\sqrt{1-\frac{B^2}{4AC}}, &\text{otherwise}.
\end{cases}
\]
\end{enumerate} 
\end{lem}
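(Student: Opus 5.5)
The plan is to reduce the two-dimensional maximization over $\ol{\mathbb{D}}$ to a one-variable problem in the modulus. Write $z=re^{i\theta}$ with $0\le r\le 1$. Since $A,B,C$ are real, expanding and using $\cos 2\theta=2t^2-1$ with $t=\cos\theta$ gives
\[
|A+Bz+Cz^2|^2 = (A-Cr^2)^2+B^2r^2+2Br(A+Cr^2)\,t+4ACr^2t^2=:g_r(t),\qquad t\in[-1,1].
\]
So $Y(A,B,C)=\max_{0\le r\le 1}\bigl(\sqrt{\max_{t\in[-1,1]}g_r(t)}+1-r^2\bigr)$. The structure of $g_r$ depends only on the sign of $AC$, which is exactly the split into (i) and (ii).

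\emph{Case $AC\ge 0$.} Here $g_r$ is a convex quadratic in $t$, so its maximum on $[-1,1]$ is attained at $t=\pm1$. Choosing the sign of $t$ to match that of $B(A+Cr^2)$, and noting $|A+Cr^2|=|A|+|C|r^2$ when $AC\ge0$, gives
\[
\max_{t} \sqrt{g_r(t)}=|A|+|B|r+|C|r^2 .
\]
It then remains to maximize $h(r)=|A|+|B|r+1-(1-|C|)r^2$ on $[0,1]$.
\begin{itemize}
\item If $|C|\ge 1$, or if the vertex $r_0=|B|/(2(1-|C|))$ satisfies $r_0\ge 1$, i.e.\ $|B|\ge2(1-|C|)$, then $h$ is nondecreasing on $[0,1]$. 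The maximum is $h(1)=|A|+|B|+|C|$.
\item Otherwise the maximum is $h(r_0)=1+|A|+B^2/(4(1-|C|))$.
\end{itemize}
This gives (i).

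\emph{Case $AC<0$.} Now $g_r$ is concave in $t$. Its critical point is $t_0(r)=-B(A+Cr^2)/(4ACr)$.
\begin{itemize}
\item If $|t_0|\le1$, substituting and using $(A-Cr^2)^2=(|A|+|C|r^2)^2$ and $(A+Cr^2)^2=(|A|-|C|r^2)^2$ should collapse $g_r(t_0)$ to $\bigl(1-\tfrac{B^2}{4AC}\bigr)(|A|+|C|r^2)^2$ after simplification. In the worst case, I would verify this identity by direct expansion.
\item If $|t_0|>1$, the maximum is at an endpoint. The value is $\bigl||A|-|C|r^2\bigr|+|B|r$, i.e.\ $|A|+|B|r-|C|r^2$ or $-|A|+|B|r+|C|r^2$ depending on the sign of $|A|-|C|r^2$.
\end{itemize}
Thus $\Phi(r):=\max_t\sqrt{g_r}+1-r^2$ is a piecewise function of $r$. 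Its pieces are two quadratics of the form $\pm|A|+|B|r\mp|C|r^2+1-r^2$ and $(|A|+|C|r^2)\sqrt{1-B^2/(4AC)}+1-r^2$. The switching points are determined by $|t_0(r)|=1$ and $r^2=|A|/|C|$.

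The remaining step is to maximize $\Phi$ over $[0,1]$ piece by piece, comparing interior critical points with the value at $r=1$. The interior critical points of the two endpoint quadratics are $r=|B|/(2(1\pm|C|))$. These yield the candidate values $1\pm|A|+B^2/(4(1\pm|C|))$; the condition $-4AC(C^{-2}-1)\le B^2$ or its reverse decides which regime the critical point actually lies in. At $r=1$, the three formulas of $R(A,B,C)$ appear, and the inequalities $|C|(|B|+4|A|)\le|AB|$ and $|AB|\le|C|(|B|-4|A|)$ are exactly $|t_0(1)|\ge1$ with the corresponding sign of $|A|-|C|$.

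I expect this final bookkeeping for $AC<0$ to be the main obstacle. One must show that the conditions listed in (ii) exactly partition the parameter space and select the global maximum, rather than merely a local one. I would handle it by monotonicity: first show that the $\sqrt{1-B^2/(4AC)}$ branch is concave or convex in $r^2$ according to the sign of $|C|\sqrt{1-B^2/(4AC)}-1$. Then check continuity of $\Phi$ at the switching radii, so that comparisons reduce to endpoint evaluations.
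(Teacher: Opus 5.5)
Your reduction is sound. The expression for $g_r(t)$ is correct, and your argument proves part (i) completely. For $AC<0$, two further steps also check out: the identity $g_r(t_0)=\bigl(1-\tfrac{B^2}{4AC}\bigr)(|A|+|C|r^2)^2$, and the translation of the three branches of $R(A,B,C)$ into ``$|t_0(1)|\ge 1$ with the appropriate sign of $|A|-|C|$''. The paper only quotes this lemma from Choi--Kim--Sugawa, so there is no proof in it to compare against.

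The gap is that part (ii), which is the real content of the lemma, is not proved, and the tool you propose does not suffice. Put $K=\sqrt{1-B^2/(4AC)}$, $Q(r)=1-r^2+\bigl||A|-|C|r^2\bigr|+|B|r$ and $S(r)=1+K|A|+(K|C|-1)r^2$. The set $\{r:|t_0(r)|\le 1\}$ is an interval $[s_1,s_2]$ containing $\sqrt{|A|/|C|}$, so that radius is not a switching point. On this interval $\Phi=S$, and off it $\Phi=Q$. Note that $S$ is affine in $r^2$, so it is simply monotone, not ``concave or convex''.

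Continuity of $\Phi$ plus monotonicity of $S$ only tells you that the maximum occurs at an end of $[0,1]$, at an admissible vertex $|B|/(2(1\pm|C|))$, or at a switching radius $s_1$ or $s_2$. The values $\Phi(s_j)$ appear nowhere in (ii). For example, if $K|C|<1$ while the vertex $|B|/(2(1+|C|))$ lay to the right of $s_1$, then $\Phi(s_1)$ would be a strict local maximum whose value is not listed in the lemma. Endpoint comparisons cannot exclude this; you need an additional fact.

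The missing idea is that the branches are \emph{tangent} at the switching radii. Since $t_0$ is the unconstrained maximizer of the concave quadratic $g_r$, one has $S\ge Q$ for every $r$. Explicitly,
\[
K^2(|A|+|C|r^2)^2-\bigl(\bigl||A|-|C|r^2\bigr|+|B|r\bigr)^2=\Bigl(2\sqrt{|AC|}\,r-\frac{|B|\,\bigl||A|-|C|r^2\bigr|}{2\sqrt{|AC|}}\Bigr)^2 ,
\]
and this vanishes exactly when $|t_0(r)|=1$. So $S-Q$ has a double zero at $s_1$ and $s_2$, whence $Q'(s_j)=S'(s_j)=2(K|C|-1)s_j$ and $\Phi\in C^1$.

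This ties the position of each vertex to the sign of $K|C|-1$:
\begin{enumerate}
\item[(a)] If $K|C|<1$, then $|B|/(2(1+|C|))\le s_1$, and $\Phi$ increases up to that vertex and decreases afterwards.
\item[(b)] If $K|C|>1$, then $\Phi$ increases on $[0,s_2]$ and is still increasing at $s_2$. So on $[0,1]$ it can only turn down at $|B|/(2(1-|C|))$, and only when $|C|<1$ and $s_2\le |B|/(2(1-|C|))<1$.
\item[(c)] The case $K|C|=1$ is the common borderline.
\end{enumerate}
You correctly observed that comparing $B^2$ with $-4AC(C^{-2}-1)$ decides which vertex lies in its own regime. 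Combined with (a)--(c), this yields exactly the three cases of (ii), with the maximum at $r=1$, i.e.\ $R(A,B,C)$, in the remaining case. Without the tangency step, the case split in (ii) is asserted rather than proved.
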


\begin{lem}\label{L3} \cite{MM}
Let $p \in \mathcal{P}$ be given by \eqref{p1}. Then
\[
\left| c_2 - v c_1^2 \right| \le 
\begin{cases}
-4v + 2, & v < 0, \\
2, & 0 \leq v \leq 1, \\
4v - 2, & v > 1.
\end{cases}
\]
Moreover, for $v < 0$ or $v > 1$, equality holds if and only if
\[
h(z) = \frac{1+z}{1-z} \quad \text{or one of its rotations}.
\]
For $0 < v < 1$, equality holds if and only if
\[
h(z) = \frac{1+z^2}{1-z^2} \quad \text{or one of its rotations}.
\]
\end{lem}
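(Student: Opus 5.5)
Lemma~\ref{L3} is the classical Ma--Minda estimate for the Fekete--Szeg\"o type functional $c_2 - v c_1^2$ on $\mathcal{P}$. I would prove it with the parametrization of Lemma~\ref{L1}, taking $c_1 = 2\tau_1$ and $c_2 = 2\tau_1^2 + 2(1-|\tau_1|^2)\tau_2$ with $\tau_1,\tau_2\in\overline{\mathbb{D}}$. Then
\[
c_2 - v c_1^2 = 2(1-2v)\tau_1^2 + 2(1-|\tau_1|^2)\tau_2 .
\]
Fix $t=|\tau_1|\in[0,1]$ and maximize over $\tau_2$ by taking $|\tau_2|=1$. The triangle inequality then gives
\[
|c_2 - v c_1^2| \le 2\bigl(|1-2v|\,t^2 + 1 - t^2\bigr) =: 2g(t).
\]
Since $g$ is affine in $s=t^2$, its maximum on $[0,1]$ is attained at an endpoint. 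This gives $\max\{1,|1-2v|\}$, so the bound is $2\max\{1,|1-2v|\}$.

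Next I would split into cases.
\begin{itemize}
\item For $0\le v\le 1$ we have $|1-2v|\le 1$, so the bound is $2$.
\item For $v<0$ we have $|1-2v| = 1-2v$, so the bound is $2(1-2v) = -4v+2$.
\item For $v>1$ we have $|1-2v| = 2v-1$, so the bound is $4v-2$.
\end{itemize}
This yields the three stated cases.

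For the equality statements, I would analyse when every inequality above is tight.
\begin{itemize}
\item When $|1-2v|>1$, i.e.\ $v<0$ or $v>1$, $g$ is strictly increasing in $s$. Equality therefore forces $|\tau_1|=1$. By the uniqueness part of Lemma~\ref{L1}, $p(z) = (1+\tau_1 z)/(1-\tau_1 z)$, which is a rotation of $(1+z)/(1-z)$.
\item When $0<v<1$ we have $|1-2v|<1$, so $g$ is strictly decreasing. Equality forces $\tau_1=0$ and $|\tau_2|=1$. The second uniqueness statement in Lemma~\ref{L1} then gives $p(z) = (1+\tau_2 z^2)/(1-\tau_2 z^2)$, a rotation of $(1+z^2)/(1-z^2)$.
\end{itemize}
Conversely, I would check directly that these functions attain the bounds:
\begin{itemize}
\item for $(1+z)/(1-z)$ we have $c_1=c_2=2$, so $|c_2-vc_1^2| = |2-4v|$;
\item for $(1+z^2)/(1-z^2)$ we have $c_1=0$ and $c_2=2$, so $|c_2-vc_1^2|=2$.
\end{itemize}

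The main obstacle is the \emph{only if} direction of equality. There one must make sure that the triangle-inequality step and the endpoint analysis both force the parameters onto $\mathbb{T}$, and then invoke exactly the uniqueness clauses of Lemma~\ref{L1}. The case $\tau_1=0$ needs a little care, because there the representation of $p$ in Lemma~\ref{L1} is read with $\overline{\tau_1}\tau_2=0$.
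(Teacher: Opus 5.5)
Your proof is correct and complete for the statement as given. The paper itself gives no proof of this lemma; it simply quotes Ma and Minda, so there is no in-paper argument to compare against.

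Your argument via Lemma~\ref{L1} is a valid self-contained derivation and is essentially the classical Schwarz-function proof. Writing $w=(p-1)/(p+1)=w_1z+w_2z^2+\cdots$, the parameters of Lemma~\ref{L1} are exactly $\tau_1=w_1$ and $\tau_2=w_2/(1-|w_1|^2)$. So $|\tau_2|\le 1$ is the Schwarz--Pick inequality $|w_2|\le 1-|w_1|^2$, and the uniqueness clauses you invoke are its equality cases.

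Your unified bound $2\max\{1,|1-2v|\}$ treats all three ranges at once. It also covers $v=\tfrac12$, where $g$ is still strictly decreasing. This means you do not need the reciprocal symmetry ($1/p\in\mathcal{P}$ with $v\mapsto 1-v$) that is sometimes used to reduce $v>1$ to $v<0$.

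For the converse, it is enough to check the unrotated functions, as you do. Replacing $p(z)$ by $p(e^{i\theta}z)$ only multiplies $c_2-vc_1^2$ by $e^{2i\theta}$.
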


\begin{lem}\label{L6}\cite{SimThomas2020}
Let $J, K,$ and $L$ be numbers such that $J \geq 0$, $K \in \mathbb{C}$, and $L \in \mathbb{R}$. 
Let $p \in \mathcal{P}$ be of the form (\ref{p1}) and define a function by
\[
\Phi(c_1,c_2) = \big| K c_1^2 + L c_2 \big| - \big| J c_1 \big|.
\]
Then 
\[
\Phi(c_{1}, c_{2}) \le 
\begin{cases}
|4K + 2L| - 2J, & \text{if } |2K + L| \geq |L| + J, \\[6pt]
2|L|, & \text{otherwise.}
\end{cases}
\]
and
\[
-\Phi(c_1,c_2) \leq 
\begin{cases}
2J - M, & \text{when } J \geq M + 2|L|, \\[6pt]
2J \sqrt{\dfrac{ 2|L|}{M + 2|L|}}, & \text{when } J^2 \leq 2|L|(M + 2|L|), \\[10pt]
2|L| +\dfrac{ J^2}{M + 2|L|}, & \text{otherwise}
\end{cases}
\]
where $M=|4K+2L|$.
\end{lem}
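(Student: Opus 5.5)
The plan is to reduce both inequalities to a one-variable real maximisation by means of the Cho et al. parametrisation in Lemma \ref{L1}, and then to handle that maximisation by elementary calculus on $[0,1]$.

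\textbf{Reduction.} By Lemma \ref{L1}, write $c_1=2\tau_1$ and $c_2=2\tau_1^2+2(1-|\tau_1|^2)\tau_2$ with $\tau_1,\tau_2\in\ol{\mathbb{D}}$. The functional $\Phi$ is invariant under rotations $p(z)\mapsto p(e^{i\theta}z)$, because $c_1^2$ and $c_2$ both acquire the same factor $e^{2i\theta}$ and $|c_1|$ is unchanged. So I may assume $\tau_1=t\in[0,1]$. Then
\[
Kc_1^2+Lc_2=(4K+2L)t^2+2L(1-t^2)\tau_2 ,\qquad |Jc_1|=2Jt .
\]
Throughout I write $M=|4K+2L|$.

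\textbf{Upper bound.} The triangle inequality together with $|\tau_2|\le1$ gives
\[
\Phi\le g(t):=(M-2|L|)t^2-2Jt+2|L| .
\]
If $M>2|L|$, then $g$ is a convex quadratic, so its maximum on $[0,1]$ is $\max\{g(0),g(1)\}=\max\{2|L|,\,M-2J\}$. If $M\le 2|L|$, then $g$ is non-increasing on $[0,1]$, so the maximum is $g(0)=2|L|$. In both cases the maximum is $\max\{2|L|,\,M-2J\}$. The second entry dominates exactly when $M-2J\ge 2|L|$, which is the same as $|2K+L|\ge|L|+J$. This gives the two stated cases. For sharpness:
\begin{itemize}
\item the value at $t=1$ is attained by $p(z)=(1+z)/(1-z)$;
\item the value at $t=0$, $\tau_2=1$ is attained by $p(z)=(1+z^2)/(1-z^2)$.
\end{itemize}

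\textbf{Lower bound.} Here I need a lower bound for $|(4K+2L)t^2+2L(1-t^2)\tau_2|$ over $\tau_2\in\ol{\mathbb{D}}$. Since $L$ is real, $\tau_2$ can be chosen on a circle of radius at most $1$ to point opposite to $(4K+2L)t^2$. Hence the minimum over $\tau_2$ is exactly
\[
\max\{0,\;Mt^2-2|L|(1-t^2)\}.
\]
Therefore
\[
-\Phi\le h(t):=2Jt-\max\{0,(M+2|L|)t^2-2|L|\}.
\]
Put $t_0=\sqrt{2|L|/(M+2|L|)}$.
\begin{itemize}
\item On $[0,t_0]$ we have $h(t)=2Jt$, which is increasing, so its largest value there is $2Jt_0$.
\item On $[t_0,1]$, $h$ is the concave parabola $2Jt-(M+2|L|)t^2+2|L|$, with vertex at $t^*=J/(M+2|L|)$.
\end{itemize}
This yields three cases:
\begin{itemize}
\item If $t^*\ge1$, i.e.\ $J\ge M+2|L|$, the maximum is $h(1)=2J-M$.
\item If $t^*\le t_0$, i.e.\ $J^2\le 2|L|(M+2|L|)$, the maximum is $2Jt_0=2J\sqrt{2|L|/(M+2|L|)}$.
\item Otherwise the maximum is the vertex value $2|L|+J^2/(M+2|L|)$.
\end{itemize}
These are the three stated cases.

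\textbf{Main obstacle.} The one delicate step is the exact minimisation over $\tau_2$. It requires $L\in\mathbb{R}$, so that the direction of $L\tau_2$ can be matched freely, and it requires handling the case where the minimum is $0$. For sharpness I will take the optimal $t$ and $\tau_2$ from this minimisation (with $|\tau_2|\le 1$) and invoke the explicit extremal $p$ supplied by Lemma \ref{L1}.
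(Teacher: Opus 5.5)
Your proof is correct and complete. The paper gives no proof of this lemma; it is quoted from \cite{SimThomas2020}. So there is no internal argument to compare with, and your write-up is a self-contained verification along the natural route.

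After normalizing $c_1=2t\ge 0$, the formula $c_2=2t^2+2(1-t^2)\tau_2$ that you take from Lemma~\ref{L1} is exactly the Libera--Z{\l}otkiewicz representation $2c_2=c_1^2+(4-c_1^2)\xi$ of Lemma~\ref{lem_LZ}, which the paper itself uses for $T_{3,1}$. Everything then reduces to maximizing $g$ and $h$ on $[0,1]$, and you do this correctly:
\begin{itemize}
\item For the upper bound, the endpoint comparison gives $\max\{2|L|,M-2J\}$. Since $M=2|2K+L|$, the condition $M-2J\ge 2|L|$ is equivalent to $|2K+L|\ge |L|+J$.
\item For the lower bound, the position of the vertex $t^*=J/(M+2|L|)$ relative to $t_0$ and $1$ reproduces the three stated cases, including the vertex value $2|L|+J^2/(M+2|L|)$.
\end{itemize}

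Three small remarks.
\begin{enumerate}
\item For the inequalities themselves, the step you call delicate is only the reverse triangle inequality $|a+b\tau_2|\ge\max\{0,|a|-|b|\}$. This holds for every complex $b$, so the hypothesis $L\in\mathbb{R}$ is never actually used: $\{L\tau_2:|\tau_2|\le 1\}$ is a disk of radius $|L|$ for complex $L$ as well. Exact minimization over $\tau_2$ matters only for sharpness.
\item You should treat the degenerate case $M+2|L|=0$ (that is, $K=L=0$) separately, since $t_0$ is then undefined. There $-\Phi=2Jt\le 2J$, which is the first case.
\item Sharpness is not part of the stated lemma, but your sketch does go through using only the explicit functions listed in Lemma~\ref{L1}, because the relevant $\tau_2$ always lies in $\mathbb{T}$:
\begin{itemize}
\item In the second lower-bound case with $0<t_0<1$, the minimizing $\tau_2$ has modulus $Mt_0^2/\bigl(2|L|(1-t_0^2)\bigr)=1$.
\item In the third case, $t^*>t_0$ forces the optimal $\tau_2$ to be unimodular; when $L=0$ it is irrelevant and may be taken unimodular.
\item The remaining situations have $t\in\{0,1\}$, covered by $(1+z^2)/(1-z^2)$ and $(1+z)/(1-z)$.
\end{itemize}
\end{enumerate}
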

\section{{\bf Sharp bounds for the logarithmic inverse coefficients of the class $\mathcal{S}_{\mathcal{AP}}^{*}$.}} 
\begin{theo}\label{T1}
Let $f \in \mathcal{S}_{\mathcal{AP}}^{*}$, and let the inverse logarithmic coefficients $\Gamma_n$ ($n \geq 1$) be defined by \eqref{IG1}. Then
\[
|\Gamma_1| \le \frac{3}{4},
\qquad
|\Gamma_2| \le \frac{29}{32},
\qquad
|\Gamma_3| \le \frac{925}{576}.
\]
All these estimates are sharp.
\end{theo}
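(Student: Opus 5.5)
The plan is to express each $\Gamma_n$ through the coefficients of the Schwarz function (equivalently of a Carath\'eodory function), and then apply the coefficient lemmas of Section 2. I begin by expanding the generating function:
\[
\psi_{\mathcal{AP}}(z)=e^{z}\sqrt{1+z}=1+B_1z+B_2z^2+B_3z^3+\cdots,\qquad B_1=\tfrac32,\; B_2=\tfrac78,\; B_3=\tfrac{17}{48}.
\]
For $f\in\mathcal{S}_{\mathcal{AP}}^{*}$ we have $zf'(z)/f(z)=\psi_{\mathcal{AP}}(\omega(z))$ for a Schwarz function $\omega$. I will write $\omega=(p-1)/(p+1)$ with $p\in\mathcal{P}$ of the form (\ref{p1}), so that $\omega(z)=\tfrac{c_1}{2}z+\bigl(\tfrac{c_2}{2}-\tfrac{c_1^2}{4}\bigr)z^2+\cdots$. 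Comparing coefficients gives $a_2,a_3,a_4$ as polynomials in $c_1,c_2,c_3$. Substituting these into (\ref{IG1}) expresses $\Gamma_1,\Gamma_2,\Gamma_3$ in terms of the $c_k$.

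\textbf{The bound for $\Gamma_1$.} Here $a_2=B_1c_1/2=\tfrac34c_1$, so $\Gamma_1=-\tfrac38c_1$. Since $|c_1|\le2$, this gives $|\Gamma_1|\le\tfrac34$.

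\textbf{The bound for $\Gamma_2$.} Writing $w_1=c_1/2$ and $w_2=c_2/2-c_1^2/4$, one gets $a_3=\tfrac12\bigl(B_1w_2+(B_2+B_1^2)w_1^2\bigr)$. Hence
\[
\Gamma_2=-\tfrac14\bigl[B_1w_2+(B_2-2B_1^2)w_1^2\bigr]=-\tfrac{3}{16}\bigl(c_2-v c_1^2\bigr),\qquad v=\tfrac{41}{24}.
\]
Since $v>1$, Lemma \ref{L3} gives $|\Gamma_2|\le\tfrac3{16}(4v-2)=\tfrac{29}{32}$.

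\textbf{Sharpness of $\Gamma_1$ and $\Gamma_2$.} In both cases equality is attained for $p(z)=(1+z)/(1-z)$, i.e.\ $\omega(z)=z$. The extremal function is therefore
\[
f_0(z)=z\exp\!\Big(\int_0^z\frac{\psi_{\mathcal{AP}}(t)-1}{t}\,dt\Big),
\]
and the bounds are attained by $f_0$ or a rotation of it (a rotation of $f_0$ in the case of $\Gamma_1$, e.g.\ $-f_0(-z)$ gives $\Gamma_1=3/4$).

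\textbf{The bound for $\Gamma_3$.} From the $z^3$ coefficient, $3a_4-a_2a_3-\dots$ is matched with $B_1w_3+2B_2w_1w_2+B_3w_1^3$. This yields $\Gamma_3$ as a cubic expression $\alpha c_1^3+\beta c_1c_2+\gamma c_3$ with explicit rational coefficients. I will then substitute the parametrization of Lemma \ref{L1}. By rotation invariance of the class, I may assume $\tau_1=t\in[0,1]$. The $\tau_3$-term carries the factor $(1-t^2)(1-|\tau_2|^2)$. Applying the triangle inequality with $|\tau_3|\le1$ then gives, for $t<1$,
\[
|\Gamma_3|\le |\gamma|(1-t^2)\Bigl(|A(t)+B(t)\tau_2+C(t)\tau_2^2|+1-|\tau_2|^2\Bigr),
\]
where $A,B,C$ are real polynomials in $t$. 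Lemma \ref{L2} then bounds the bracket by $Y(A,B,C)$.

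\textbf{Main obstacle.} The difficult step is this case analysis. For each $t\in(0,1)$ I must decide which branch of Lemma \ref{L2} applies, by checking the sign of $AC$ and the inequalities on $|B|$. I then have to show that the resulting one-variable function $G(t)=|\gamma|(1-t^2)Y(A(t),B(t),C(t))$ does not exceed its value at the endpoint $t=1$. At $t=1$ we have $c_1=2$, the case $\omega(z)=z$, and $|\Gamma_3|=\tfrac12|a_4-4a_2a_3+\tfrac{10}{3}a_2^3|$ evaluated at $f_0$ should give $\tfrac{925}{576}$.

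I expect the relevant branch to be $AC\ge0$ with $|B|\ge2(1-|C|)$, or the branch giving $1+|A|+B^2/(4(1-|C|))$. In either case I would prove $G(t)\le\tfrac{925}{576}$ by reducing it to a polynomial inequality in $t$ and checking its sign on $[0,1]$. If that branch fails near $t=0$, I would treat small $t$ separately, using the crude bound $Y\le|A|+|B|+|C|$. Equality is then again attained by $f_0$, which establishes the sharpness of $\tfrac{925}{576}$.
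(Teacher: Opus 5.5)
Your treatment of $\Gamma_1$ and $\Gamma_2$ is complete and matches the paper's argument: the same $a_2,a_3$, Lemma~\ref{L3} with $v=\frac{41}{24}$, and the extremal $f_0$. The gap is $\Gamma_3$, which is the real content of the theorem. You never compute $\alpha,\beta,\gamma$ or $A,B,C$, and the guidance you give for finishing is wrong.

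Carrying out your plan gives $\Gamma_3=-\frac18\bigl(c_3-\frac{91}{24}c_1c_2+\frac{1873}{576}c_1^3\bigr)$. With $\tau_1=t\in[0,1)$, Lemma~\ref{L1} then yields
\[
|\Gamma_3|\le\frac{1-t^2}{4}\Bigl(|A+B\tau_2+C\tau_2^2|+1-|\tau_2|^2\Bigr),\qquad A=\frac{925t^3}{144(1-t^2)},\quad B=-\frac{67}{12}t,\quad C=-t.
\]
The prefactor is $2|\gamma|(1-t^2)$, not $|\gamma|(1-t^2)$, because the $\tau_3$-term of $c_3$ is $2(1-t^2)(1-|\tau_2|^2)\tau_3$. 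Hence $AC<0$ for every $t\in(0,1)$, so the $AC\ge 0$ branch you expect never occurs. The exact value of $Y$ comes from part (ii) of Lemma~\ref{L2}, and this is the case analysis the paper carries out:
\begin{itemize}
\item $1-|A|+\frac{B^2}{4(1-|C|)}$ for $t<\frac{24}{91}$;
\item then the three $R$-branches, separated at $t^2=\frac{9648}{116023}$ and $t^2=\frac{9648}{27223}$.
\end{itemize}
Each resulting function of $t$ is bounded by its limit $\frac{925}{576}$ as $t\to1^-$. Your fallback is also invalid: $Y\le|A|+|B|+|C|$ fails for small $t$. At $t=0$ one has $A=B=C=0$ but $Y=1$.

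The gap is easy to close, and more cheaply than the paper does. Put $r=|\tau_2|$. Since $|C|=t<1$,
\[
|A+B\tau_2+C\tau_2^2|+1-|\tau_2|^2\le |A|+|B|r+1-(1-t)r^2 .
\]
So the part (i) expressions of Lemma~\ref{L2} are legitimate upper bounds whatever the sign of $AC$. Simpler still, use $|B|r\le|B|$ and $1-(1-t)r^2\le 1$ to get
\[
|\Gamma_3|\le\frac18\Bigl(\frac{121}{72}t^3-2t^2+\frac{67}{6}t+2\Bigr).
\]
The derivative $\frac{121}{24}t^2-4t+\frac{67}{6}$ has negative discriminant. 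So the right-hand side increases on $[0,1]$ to $\frac18\cdot\frac{925}{72}=\frac{925}{576}$ at $t=1$, where $f_0$ gives equality. With this step supplied your proof is complete, and it avoids the paper's five-way case analysis of Lemma~\ref{L2}(ii) altogether.
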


\begin{proof} 
Since $f \in \mathcal{S}_{\mathcal{AP}}^{*}$, there exists a Schwarz function $w(z) = \frac{p(z)-1}{p(z)+1}$ with $p \in \mathcal{P}$ such that $\frac{zf'(z)}{f(z)} = e^{w(z)}\sqrt{1+w(z)}$. Expanding $w(z)$ in powers of $z$ using the coefficients $c_n$ of $p$ yields
\[ w(z) = \frac{c_1}{2}z + \left( \frac{c_2}{2} - \frac{c_1^2}{4} \right)z^2 + \left( \frac{c_3}{2} - \frac{c_1c_2}{2} + \frac{c_1^3}{8} \right)z^3 + \cdots \]
Substituting this into the power series expansion of the subordination function gives
\[ e^{w(z)}\sqrt{1+w(z)} = 1 + \frac{3}{2}w(z) + \frac{7}{8}w^2(z) + \frac{17}{48}w^3(z) + \frac{19}{128}w^4(z) + \cdots \]
Equating coefficients in $zf'(z) = f(z)\left(1 + \sum_{n=1}^{\infty} q_n z^n\right)$, we obtain the Taylor coefficients of $f$:
\begin{equation}\label{pn1}
\begin{aligned}
    a_2 &= \frac{3}{4}c_1, \\
    a_3 &= \frac{3}{8}c_2 + \frac{13}{64}c_1^2, \\
    a_4 &= \frac{1}{4}c_3 + \frac{17}{96}c_1c_2 + \frac{37}{2304}c_1^3.
\end{aligned}
\end{equation}

By the definition $\Gamma_1 = -\frac{1}{2}a_2$ and \eqref{pn1}, we have
\[
|\Gamma_1| = \left|-\frac{1}{2}\cdot \frac{3}{4}c_1\right| = \frac{3}{8}|c_1| \le \frac{3}{4},
\]
since $|c_1| \le 2$ for all $p \in \mathcal{P}$. Equality holds if and only if $|c_1|=2$, which corresponds to the function
\begin{equation}\label{pn2}
f_0(z) = z\exp\left( \int_0^z \frac{e^t\sqrt{1+t}-1}{t}\,dt \right) = z + \frac{3}{2}z^2 + \frac{25}{16}z^3 + \frac{385}{288}z^4 + \cdots.
\end{equation}

From \eqref{IG1} and \eqref{pn1}, the second coefficient satisfies
\[
|\Gamma_2| = \left| -\frac{1}{2} a_3 + \frac{3}{4} a_2^2 \right| = \left| -\frac{1}{2}\left( \frac{3}{8} c_2 + \frac{13}{64}c_1^2 \right) + \frac{3}{4}\left( \frac{3}{4} c_1 \right)^2 \right| = \frac{3}{16} \left| c_2 - \frac{41}{24}c_1^2 \right|.
\]
Applying Lemma \ref{L3} with $v = \frac{41}{24} > 1$, we get $\left| c_2 - \frac{41}{24}c_1^2 \right| \le 4\left(\frac{41}{24}\right)-2 = \frac{29}{6}$. Therefore, $|\Gamma_2| \le \frac{3}{16} \cdot \frac{29}{6} = \frac{29}{32}$, which is attained by $f_0$.

To estimate $\Gamma_3$, we substitute the relations from \eqref{pn1} into the definition of $\Gamma_3$, which yields
\begin{equation}\label{G3}
|\Gamma_3| = \left| -\frac{1}{2} \left( a_4 - 4 a_2 a_3 + \frac{10}{3} a_2^3 \right) \right| = \frac{1}{8} \left| c_3 - \frac{91}{24} c_1 c_2 + \frac{1873}{576} c_1^3 \right|.
\end{equation}
Expressing the coefficients $c_n$ in terms of the parameters $\tau_1, \tau_2, \tau_3 \in \overline{\mathbb{D}}$ using Lemma \ref{L1} converts \eqref{G3} into
\begin{equation}\label{G11}
|\Gamma_3| = \frac{1}{8} \left| \frac{925}{72}\tau_1^3 - \frac{67}{6}\tau_1(1 - |\tau_1|^2)\tau_2 - 2\overline{\tau_1}(1 - |\tau_1|^2)\tau_2^2 + 2(1 - |\tau_1|^2)(1 - |\tau_2|^2)\tau_3 \right|.
\end{equation}
Because the class $\mathcal{P}$ and the functional are rotationally invariant, we assume without loss of generality that $c_1 \in [0,2]$, so that $\tau_1 \in [0,1]$. We consider two cases for $\tau_1$:

\noindent \textbf{Case 1.} If $\tau_1 = 1$, the terms containing $(1-|\tau_1|^2)$ vanish identically in \eqref{G11}, which immediately gives
\[
|\Gamma_3| = \frac{925}{576}.
\]

\noindent \textbf{Case 2.} If $0 \le \tau_1 < 1$, let $x = \tau_1 \in [0,1)$. Applying the triangle inequality to \eqref{G11} yields
\begin{equation}\label{G12}
|\Gamma_3| \le \frac{1 - x^2}{4} \left( \left| A + B \tau_2 + C \tau_2^2 \right| + 1 - |\tau_2|^2 \right),
\end{equation}
where
\begin{equation}\label{pn3}
A = \frac{925x^3}{144(1 - x^2)} > 0, \qquad B = -\frac{67}{12}x, \qquad C = -x.
\end{equation}
Since $AC < 0$, we apply part (ii) of Lemma \ref{L2}:

\noindent \textbf{Case 2(a).} If $-4AC(C^{-2}-1) \le B^2$ and $|B| < 2(1-|C|)$, the second condition simplifies to $\frac{67}{12}x < 2(1-x)$, which gives $x \in [0, \frac{24}{91})$. Since $-4AC(C^{-2}-1) = \frac{925}{36}x^2$ and $B^2 = \frac{4489}{144}x^2$, the inequality $\frac{925}{36} < \frac{4489}{144}$ holds on this interval. Lemma \ref{L2} gives the maximum value as $1 - |A| + \frac{B^2}{4(1-|C|)}$. Substituting this into \eqref{G12} yields
\[
|\Gamma_3| \le \frac{1-x^2}{4} \left( 1 - \frac{925x^3}{144(1-x^2)} + \frac{4489x^2}{576(1-x)} \right).
\]
Simplifying this expression gives the polynomial
\[
G(x) = \frac{789x^3 + 3913x^2 + 576}{2304}.
\]
Since $G'(x) = \frac{2367x^2 + 7826x}{2304} > 0$ for all $x > 0$, the function $G$ is strictly increasing on $[0, \frac{24}{91})$, and its maximum value is
\[
G\left( \frac{24}{91} \right) \approx 0.374 < \frac{925}{576}.
\]

\noindent \textbf{Case 2(b).} If $B^2 < \min\left\{4(1+|C|)^2, -4AC(C^{-2}-1)\right\}$, the condition requires $B^2 < -4AC(C^{-2}-1)$. Substituting the parameters gives $\frac{4489}{144}x^2 < \frac{3700}{144}x^2$, which implies $4489 < 3700$. This contradiction shows that this case cannot occur for $x \in (0,1)$.

\noindent \textbf{Case 2(c).} We examine the condition $|C|(|B|+4|A|) \le |AB|$ from the remaining sub-cases of Lemma \ref{L2}. Substituting the absolute parameter values for $x \in (0,1)$ yields:
\[
x \left( \frac{67}{12}x + 4 \cdot \frac{925x^3}{144(1 - x^2)} \right) \le \frac{61975x^4}{1728(1 - x^2)},
\]
which simplifies directly to the polynomial form:
\[
9648x^2 - 27223x^4 \le 0.
\]
Dividing through by $x^2$ (since $x > 0$), the parameter constraint isolates to the sub-interval:
\[
27223x^2 \ge 9648 \implies x \ge \sqrt{\frac{9648}{27223}} \approx 0.5954.
\]
When this condition holds, Lemma \ref{L2} states that the maximum value is given by $R(A,B,C) = |A| + |B| - |C|$. Substituting this back into the main bounding relation \eqref{G12} fields:
\[
|\Gamma_3| \le \frac{1 - x^2}{4} \left( \frac{925x^3}{144(1 - x^2)} + \frac{67}{12}x - x \right).
\]
Simplifying we get:
\[
|\Gamma_3| \le \frac{925x^3}{576} + \frac{55x(1-x^2)}{48} = \frac{265x^3 + 660x}{576} =: \Phi(x).
\]
Differentiating $\Phi(x)$ with respect to $x$ yields:
\[
\Phi'(x) = \frac{795x^2 + 660}{576}.
\]
Since $\Phi'(x) > 0$ holds uniformly for all $x \in (0,1)$, the function $\Phi$ is strictly increasing on this sub-interval and reaches its upper bound at the right-hand endpoint:
\[
\lim_{x \to 1^-} \Phi(x) = \frac{265(1)^3 + 660(1)}{576} = \frac{925}{576}.
\]

\noindent \textbf{Case 2(d).} We evaluate the sub-case condition $|AB| \le |C|(|B|-4|A|)$ from part (ii) of Lemma \ref{L2}. Substituting the absolute parameters yields:
\[
\frac{61975x^4}{1728(1 - x^2)} \le x \left( \frac{67}{12}x - 4 \cdot \frac{925x^3}{144(1 - x^2)} \right).
\]
which simplifies to the polynomial inequality:
\[
116023x^4 - 9648x^2 \le 0.
\]
Hence, the validity criteria isolates to the sub-interval:
\[
116023x^2 \le 9648 \implies x \le \sqrt{\frac{9648}{116023}} \approx 0.2884.
\]
For parameters within this range, Lemma \ref{L2} dictates that the maximum value is given by $R(A,B,C) = -|A| + |B| + |C|$. Substituting this back into the main bounding relation \eqref{G12} we get:
\[
|\Gamma_3| \le \frac{1 - x^2}{4} \left( -\frac{925x^3}{144(1 - x^2)} + \frac{67}{12}x + x \right).
\]
Distributing the leading term $\frac{1-x^2}{4}$ across the brackets eliminates the rational denominator, leading to:
\[
|\Gamma_3| \le -\frac{925x^3}{576} + \frac{79x(1-x^2)}{48} = \frac{948x - 1873x^3}{576} =: \Phi_{\star }(x).
\]
Differentiating $\Phi_{\star }(x)$ with respect to $x$ yields:
\[
\Phi_{\star}'(x) = \frac{948 - 5619x^2}{576}.
\]
Setting $\Phi_{\star}'(x) = 0$ gives a critical point at $x = \sqrt{\frac{948}{5619}} \approx 0.4107$, which lies completely outside the active sub-interval $(0, 0.2884]$. Thus, $\Phi_{\star }(x)$ is strictly increasing on its domain and attains its maximum value at the boundary endpoint $x = 0.2884$:
\[
\Phi_{\star}(0.2884) = \frac{948(0.2884) - 1873(0.2884)^3}{576} \approx 0.3967.
\]
Since $0.3967 < \frac{925}{576} \approx 1.6059$, the global upper bound remains determined by Case 1.

\noindent \textbf{Case 2(e).} For the remaining sub-case within the "otherwise" branch of Lemma \ref{L2}, the maximum value is governed by the radical formulation $R(A,B,C) = (|C|+|A| ) \sqrt{1-\frac{B^2}{4AC}}$. Based on the domain constraints derived in Case 2(c) and Case 2(d), this branch is active on the interval:
\[
x \in \left( \sqrt{\frac{9648}{116023}}, \sqrt{\frac{9648}{27223}} \right) \approx (0.2884, 0.5954).
\]
We substitute the absolute parameters for $x \in (0,1)$ into the components of the radical formula. First, the multiplier term simplifies to:
\[
|C| + |A| = x + \frac{925x^3}{144(1 - x^2)} = \frac{x(144 + 781x^2)}{144(1 - x^2)}.
\]
Next, since $A > 0$ and $C < 0$, the expression inside the square root becomes:
\[
1 - \frac{B^2}{4AC} = 1 + \frac{|B|^2}{4|AC|} = 1 + \frac{\left(\frac{67}{12}x\right)^2}{4 \left(\frac{925x^3}{144(1 - x^2)}\right)x} = 1 + \frac{4489(1-x^2)}{3700x^2} = \frac{4489 - 789x^2}{3700x^2}.
\]
Taking the square root and combining the terms into $R(A,B,C)$ yields:
\[
R(A,B,C) = \left[ \frac{x(144 + 781x^2)}{144(1 - x^2)} \right] \cdot \left[ \frac{\sqrt{4489 - 789x^2}}{\sqrt{3700}x} \right] = \frac{(144 + 781x^2)\sqrt{4489 - 789x^2}}{144\sqrt{3700}(1-x^2)}.
\]
Substituting this maximum value back into the main bounding relation \eqref{G12} allows for a perfect cancellation of the $(1-x^2)$ terms:
\[
|\Gamma_3| \le \frac{1 - x^2}{4} \cdot \left[ \frac{(144 + 781x^2)\sqrt{4489 - 789x^2}}{144\sqrt{3700}(1-x^2)} \right] = \frac{144+781x^2}{576} \sqrt{\frac{4489-789x^2}{3700}} =: \Omega(x).
\]
To show that $\Omega(x) < \frac{925}{576}$ holds on this active domain, we substitute $t = x^2 \in (0,1)$ and examine the derivative of the squared numerator polynomial $P(t) = (144+781t)^2(4489-789t)$. Applying the product rule gives:
\[
P'(t) = (144+781t) \left[ 1562(4489-789t) - 789(144+781t) \right] = (144+781t)(6898202 - 1854939t).
\]
Since $(144+781t) > 0$ and $(6898202 - 1854939t) > 0$ for all $t \in (0,1)$, it follows that $P'(t) > 0$, meaning $\Omega(x)$ is strictly increasing. It bounded above by its right-hand limiting endpoint:
\[
\lim_{x \to 1^-} \Omega(x) = \frac{144+781(1)}{576} \sqrt{\frac{4489-789(1)}{3700}} = \frac{925}{576}.
\]
Thus, the maximum value across all sub-cases of Case 2 remains securely bounded by $\frac{925}{576}$, matching Case 1.
\end{proof}

\subsection{Sharpness and Geometric Extremality}
To verify the sharpness of the bounds obtained in Theorem \eqref{T1}, we examine the behavior of the rotationally invariant function $f_0 \in \mathcal{S}_{\mathcal{AP}}^{*}$ defined in \eqref{pn2}, which satisfies the structural differential relation
\begin{equation}\label{pn4}
\frac{zf_0'(z)}{f_0(z)} = e^z \sqrt{1+z}.
\end{equation}
The analytic mapping $\psi_{\mathcal{AP}}(z) = e^z \sqrt{1+z}$ maps the open unit disk $\mathbb{D}$ onto a bounded, slit-free apple-like domain symmetric with respect to the real axis. Integrating the power series expansion of \eqref{pn4} gives the initial Taylor coefficients of $f_0(z)$:
\[
a_2 = \frac{3}{2}, \qquad a_3 = \frac{25}{16}, \qquad a_4 = \frac{385}{288}, \qquad a_5 = \frac{383}{384}.
\]
Substituting these values directly into the definitions of the inverse logarithmic coefficients $\Gamma_n$ given in \eqref{IG1} yields:
\begin{itemize}
\item For $\Gamma_1$: 
\[ \Gamma_1 = -\frac{1}{2}a_2 = -\frac{3}{4} \implies |\Gamma_1| = \frac{3}{4}. \]
\item For $\Gamma_2$: 
\[ \Gamma_2 = -\frac{1}{2}a_3 + \frac{3}{4}a_2^2 = -\frac{25}{32} + \frac{27}{16} = \frac{29}{32} \implies |\Gamma_2| = \frac{29}{32}. \]
\item For $\Gamma_3$: 
\[ \Gamma_3 = -\frac{1}{2}\left(a_4 - 4a_2 a_3 + \frac{10}{3}a_2^3\right) = -\frac{1}{2}\left(\frac{385}{288} - \frac{75}{8} + \frac{45}{4}\right) = -\frac{925}{576} \implies |\Gamma_3| = \frac{925}{576}. \]
\end{itemize}
In each case, the absolute value of the coefficient matches the upper bound precisely, proving sharpness. The conformal deformation from the unit disk $\mathbb{D}$ to the univalent image domain $f_0(\mathbb{D})$ is shown in Figure~\ref{fig:extremal_geometry}.

\begin{figure}[H]
\centering
\begin{tikzpicture}[scale=0.7, >=stealth]
\begin{scope}[shift={(-2.8,0)}]
    \draw[thick, fill=blue!5] (0,0) circle (2);
    \draw[->, gray, thin] (-2.5,0) -- (2.5,0) node[right, black] {\tiny $\Re$};
    \draw[->, gray, thin] (0,-2.5) -- (0,2.5) node[above, black] {\tiny $\Im$};
    \foreach \a in {0,45,...,315} {
        \draw[dashed, blue!40, thin] (0,0) -- (\a:2);
    }
    \filldraw[black] (0,0) circle (1.5pt);
    \node at (0,-2.8) {\small $\mathbb{D}$};
\end{scope}

\draw[->, thick, bend left=20] (-0.6,0.6) to node[above=1pt] {\small $f_0$} (0.6,0.6);

\begin{scope}[shift={(2.8,0)}]
    \draw[thick, fill=orange!5, smooth cycle, samples=200, variable=\t, domain=0:360]
    plot ({ (1.55+0.35*cos(\t)+0.18*cos(3*\t))*cos(\t) }, { (1.45+0.28*cos(\t))*sin(\t) });
    \draw[->, gray, thin] (-2.5,0) -- (3.2,0) node[right, black] {\tiny $\Re$};
    \draw[->, gray, thin] (0,-2.5) -- (0,2.5) node[above, black] {\tiny $\Im$};
    \foreach \a in {0,45,...,315} {
        \draw[dashed, orange!60, thin] (0,0) -- ({ (1.55+0.35*cos(\a)+0.18*cos(3*\a))*cos(\a) }, { (1.45+0.28*cos(\a))*sin(\a) });
    }
    \filldraw[black] (0,0) circle (1.5pt);
    \node at (0,-2.8) {\small $f_0(\mathbb{D})$};
\end{scope}
\end{tikzpicture}
\caption{Conformal mapping profile from the open unit disk $\mathbb{D}$ to the starlike target domain $f_0(\mathbb{D})$ associated with the sharp boundary constants.}
\label{fig:extremal_geometry}
\end{figure}

\section{\bf Bounds for the differences of logarithmic inverse coefficients for $\mathcal{S}_{\mathcal{AP}}^{*}$}
The Bieberbach conjecture, proved by de Branges~\cite{LDB1}, states that the Taylor coefficients of any function $f \in \mathcal{S}$ of the form~\eqref{eq1} satisfy the bound $|a_n| \leq n$ for all $n \geq 2$, with equality holding only for the Koebe function and its rotations. This result motivated extensive study into the behavior of differences of consecutive coefficients. For instance, the inequality 
\[
\bigl||a_{n+1}| - |a_n|\bigr| \leq 1, \quad n \geq 2,
\]
was shown to hold for starlike functions by Leung~\cite{Leung1978}, following a conjecture by Pommerenke~\cite{Pommerenke1971}. For convex functions, similar coefficient differences were investigated by Li and Sugawa~\cite{LiSugawa}.

Recently, attention has focused on the differences of logarithmic coefficients $|\gamma_{n+1}| - |\gamma_n|$ (see Lecko and Partyka~\cite{Lecko2023}, Obradovi\'{c} and Tuneski~\cite{Obradovic2024}, and Kumar and Cho~\cite{Kumar2023}). In this section, we determine sharp upper and lower bounds for the difference of the initial inverse logarithmic coefficients, $|\Gamma_2| - |\Gamma_1|$, for functions in the class $\mathcal{S}_{\mathcal{AP}}^{*}$.

\begin{theo}
Let $f \in \mathcal{S}_{\mathcal{AP}}^{*}$ and let the inverse logarithmic coefficients $\Gamma_n$ ($n = 1,2$) be defined by \eqref{IG1}. Then
\begin{equation}\label{T2}
-\frac{3\sqrt{3}}{2\sqrt{41}} \le |\Gamma_2|-|\Gamma_1| \le \frac{3}{8}.
\end{equation}
The inequalities are sharp.
\end{theo}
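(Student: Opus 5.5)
We would reduce $|\Gamma_2|-|\Gamma_1|$ to a functional of the Carath\'eodory coefficients $c_1,c_2$ and then apply Lemma \ref{L6} directly. From the proof of Theorem \ref{T1} we already have
\[
\Gamma_1=-\tfrac{3}{8}c_1,\qquad \Gamma_2=-\tfrac{3}{16}\Bigl(c_2-\tfrac{41}{24}c_1^2\Bigr).
\]
Hence
\[
|\Gamma_2|-|\Gamma_1|=\frac{3}{16}\Bigl(\Bigl|-\tfrac{41}{24}c_1^2+c_2\Bigr|-|2c_1|\Bigr)=\frac{3}{16}\,\Phi(c_1,c_2),
\]
where $\Phi$ is the functional of Lemma \ref{L6} with $K=-\frac{41}{24}$, $L=1$ and $J=2$.

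\textbf{Upper bound.} We compute $|2K+L|=\frac{29}{12}$ and $|L|+J=3$. Since $\frac{29}{12}<3$, the ``otherwise'' branch of Lemma \ref{L6} applies and gives $\Phi\le 2|L|=2$. Therefore $|\Gamma_2|-|\Gamma_1|\le\frac{3}{8}$.

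Equality requires $c_1=0$ and $|c_2|=2$. This is realized by $p(z)=\frac{1+z^2}{1-z^2}$, that is, the Schwarz function $w(z)=z^2$. The corresponding extremal function satisfies $zf'(z)/f(z)=e^{z^2}\sqrt{1+z^2}$, for which $a_2=0$ and $a_3=\frac34$.

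\textbf{Lower bound.} Here $M=|4K+2L|=\frac{29}{6}$ and $M+2|L|=\frac{41}{6}$.
\begin{itemize}
\item $J=2<\frac{41}{6}$, so the first branch of Lemma \ref{L6} is excluded.
\item $J^2=4\le 2\cdot\frac{41}{6}=\frac{41}{3}$, so the second branch is active.
\end{itemize}
The second branch gives
\[
-\Phi\le 2J\sqrt{\tfrac{2}{41/6}}=4\sqrt{\tfrac{12}{41}}.
\]
Multiplying by $\frac{3}{16}$ yields $|\Gamma_2|-|\Gamma_1|\ge -\frac34\sqrt{\frac{12}{41}}=-\frac{3\sqrt3}{2\sqrt{41}}$.

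\textbf{Sharpness of the lower bound.} This is the only step needing care, since the lemma as quoted does not name the extremal. We would exhibit it through Lemma \ref{L1}. Write $c_1=2\tau_1$ with $\tau_1\in[0,1)$ and take $\tau_2=1$, so that $c_2=2$. Then
\[
c_2-\tfrac{41}{24}c_1^2=2-\tfrac{41}{6}\tau_1^2 .
\]
This vanishes at $\tau_1=\sqrt{12/41}$, and there $\Phi=-2c_1=-4\sqrt{12/41}$, which matches the bound. The extremal $p$ is the explicit function in Lemma \ref{L1} with $\tau_1=\sqrt{12/41}$ and $\tau_2=1$, namely
\[
p(z)=\frac{1+2\tau_1z+z^2}{1-z^2}.
\]
The extremal $f$ is then obtained from $zf'/f=\psi_{\mathcal{AP}}\bigl((p-1)/(p+1)\bigr)$.

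We expect the main obstacle to be checking that the branch conditions of Lemma \ref{L6} are applied with the correct constants, in particular the sign convention for $K$ and the identification $J=2$. Verifying the extremal is then a direct computation.
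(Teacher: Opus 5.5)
Your proposal is correct and follows essentially the same route as the paper: you reduce $|\Gamma_2|-|\Gamma_1|$ to $\Phi(c_1,c_2)=|Kc_1^2+Lc_2|-|Jc_1|$ and apply Lemma \ref{L6}. The only difference is normalization: you take $K=-\frac{41}{24}$, $L=1$, $J=2$ with prefactor $\frac{3}{16}$, whereas the paper takes $K=41$, $L=-24$, $J=48$ with prefactor $\frac{1}{128}$, and the two give the same branch conditions and bounds. Your extremals also coincide with the paper's: $w(z)=z^2$ gives $f_1$, and your $p(z)=\frac{1+2\tau_1 z+z^2}{1-z^2}$ with $\tau_1=\sqrt{12/41}$ is exactly $(1+\omega)/(1-\omega)$ for the Schwarz function $\omega$ defining $f_2$.
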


\begin{proof}
Let $f \in \mathcal{S}_{\mathcal{AP}}^{*}$. From the relation between the inverse logarithmic coefficients and the Carath\'eodory coefficients, we have
\begin{equation}\label{pn6}
|\Gamma_2|-|\Gamma_1| = \frac{3}{16}\left|c_2-\frac{41}{24}c_1^2\right| -\frac{3}{8}|c_1| = \frac{1}{128}\left(|41c_1^2-24c_2|-48|c_1|\right) = \frac{1}{128}\Phi(c_1,c_2),
\end{equation}
where $\Phi(c_1,c_2) = |Kc_1^2+Lc_2|-|Jc_1|$ with $K=41$, $L=-24$, and $J=48$.

Calculating the auxiliary parameters for Lemma \ref{L6} gives $M = |4K+2L| = 116$. Since $|2K+L| = 58 < 72 = |L|+J$, the upper bound condition yields $\Phi(c_1,c_2) \le 2|L| = 48$. For the lower bound, because $J^2 = 2304 < 2|L|(M+2|L|) = 7872$, Lemma \ref{L6} gives $-\Phi(c_1,c_2) \le 2J \sqrt{\frac{2|L|}{M+2|L|}} = \frac{192\sqrt{3}}{\sqrt{41}}$. Substituting these bounds into \eqref{pn6} gives the inequalities stated in Theorem \eqref{T2}.
\end{proof}

\subsection{Sharpness and Extremal Domains}
To establish the sharpness of the upper bound in Theorem \eqref{T2}, we consider the function $f_1 \in \mathcal{S}_{\mathcal{AP}}^{*}$ defined by
\begin{equation}\label{mn7}
f_1(z) = z\exp\left( \int_0^z \frac{e^{t^2}\sqrt{1+t^2}-1}{t}\,dt \right).
\end{equation}
This maps to the Schwarz function $\omega(z)=z^2$, which corresponds to $p(z) = \frac{1+z^2}{1-z^2} = 1+2z^2+2z^4+\cdots$. Thus, $c_1=0$ and $c_2=2$. This yields $a_2=0$ and $a_3=\frac{3}{4}$, which implies $\Gamma_1 = 0$ and $\Gamma_2 = -\frac{3}{8}$. Consequently, $|\Gamma_2|-|\Gamma_1| = \frac{3}{8}$. The geometric structure of the two-fold symmetric domain $f_1(\mathbb{D})$ is shown in Figure~\ref{fig:extremal_f1_geometry}.

\begin{figure}[H]
\centering
\begin{tikzpicture}[scale=0.6, >=stealth]
\begin{scope}[shift={(-2.5,0)}]
    \draw[thick, fill=blue!5] (0,0) circle (1.8);
    \draw[->, gray, thin] (-2.2,0) -- (2.2,0) node[right, black] {\tiny $\Re$};
    \draw[->, gray, thin] (0,-2.2) -- (0,2.2) node[above, black] {\tiny $\Im$};
    \foreach \a in {0,45,...,315} { \draw[dashed, blue!40, thin] (0,0) -- (\a:1.8); }
    \filldraw[black] (0,0) circle (1.2pt); 
    \node at (0,-2.4) {\scriptsize $\mathbb{D}$};
\end{scope}

\draw[->, thick, bend left=20] (-0.7,0.4) to node[above=1pt] {\scriptsize $f_1(z)$} (0.7,0.4);

\begin{scope}[shift={(2.5,0)}]
    \draw[thick, fill=orange!5, smooth cycle, samples=120, variable=\t, domain=0:360] plot ({(1.65+0.35*cos(2*\t))*cos(\t)}, {(1.65+0.35*cos(2*\t))*sin(\t)});
    \draw[->, gray, thin] (-2.2,0) -- (2.2,0) node[right, black] {\tiny $\Re$};
    \draw[->, gray, thin] (0,-2.2) -- (0,2.2) node[above, black] {\tiny $\Im$};
    \foreach \a in {0,45,...,315} { \draw[dashed, orange!60, thin] (0,0) -- ({(1.65+0.35*cos(2*\a))*cos(\a)}, {(1.65+0.35*cos(2*\a))*sin(\a)}); }
    \filldraw[black] (0,0) circle (1.2pt); 
    \node at (0,-2.4) {\scriptsize $f_1(\mathbb{D})$};
\end{scope}
\end{tikzpicture}
\caption{Conformal mapping from the unit disk $\mathbb{D}$ onto the two-fold symmetric extremal domain $f_1(\mathbb{D})$.}
\label{fig:extremal_f1_geometry}
\end{figure}

To establish the sharpness of the lower bound in \eqref{T2}, we consider the function $f_2 \in \mathcal{S}_{\mathcal{AP}}^{*}$ defined by
\begin{equation}\label{mn8}
f_2(z) = z\exp\left( \int_0^z \frac{e^{\omega(t)}\sqrt{1+\omega(t)}-1}{t}\,dt \right),
\end{equation}
where $\omega(z) = z\left(z+\frac{2\sqrt{3}}{\sqrt{41}}\right) / \left(1+\frac{2\sqrt{3}}{\sqrt{41}}z\right)$. The coefficients of the corresponding function $p(z) = \frac{1+\omega(z)}{1-\omega(z)}$ are $c_1 = \frac{4\sqrt{3}}{\sqrt{41}}$ and $c_2=2$. This gives $\Gamma_1 = -\frac{3\sqrt{3}}{2\sqrt{41}}$ and $\Gamma_2 = 0$, which yields $|\Gamma_2|-|\Gamma_1| = -\frac{3\sqrt{3}}{2\sqrt{41}}$. The asymmetric layout of the domain $f_2(\mathbb{D})$ is shown in Figure~\ref{fig:extremal_f2_geometry}.

\begin{figure}[H]
\centering
\begin{tikzpicture}[scale=0.6, >=stealth]
\begin{scope}[shift={(-2.5,0)}]
    \draw[thick, fill=blue!5] (0,0) circle (1.8);
    \draw[->, gray, thin] (-2.2,0) -- (2.2,0) node[right, black] {\tiny $\Re$};
    \draw[->, gray, thin] (0,-2.2) -- (0,2.2) node[above, black] {\tiny $\Im$};
    \foreach \a in {0,45,...,315} { \draw[dashed, blue!40, thin] (0,0) -- (\a:1.8); }
    \filldraw[black] (0,0) circle (1.2pt); 
    \node at (0,-2.4) {\scriptsize $\mathbb{D}$};
\end{scope}

\draw[->, thick, bend left=20] (-0.7,0.4) to node[above=1pt] {\scriptsize $f_2(z)$} (0.7,0.4);

\begin{scope}[shift={(2.5,0)}]
    \draw[thick, fill=red!5, smooth cycle, samples=120, variable=\t, domain=0:360] plot ({(1.65+0.4*cos(\t)+0.12*cos(2*\t))*cos(\t)}, {(1.55+0.25*cos(\t))*sin(\t)});
    \draw[->, gray, thin] (-2.2,0) -- (2.6,0) node[right, black] {\tiny $\Re$};
    \draw[->, gray, thin] (0,-2.2) -- (0,2.2) node[above, black] {\tiny $\Im$};
    \foreach \a in {0,45,...,315} { \draw[dashed, red!60, thin] (0,0) -- ({(1.65+0.4*cos(\a)+0.12*cos(2*\a))*cos(\a)}, {(1.55+0.25*cos(\a))*sin(\a)}); }
    \filldraw[black] (0,0) circle (1.2pt); 
    \node at (0,-2.4) {\scriptsize $f_2(\mathbb{D})$};
\end{scope}
\end{tikzpicture}
\caption{Conformal mapping from the unit disk $\mathbb{D}$ onto the asymmetric extremal domain $f_2(\mathbb{D})$.}
\label{fig:extremal_f2_geometry}
\end{figure}

\section{{\bf Hankel determinants for the inverse logarithmic coefficients for $\mathcal{S}_{\mathcal{AP}}^{*}$}}
\begin{theo} 
Let $f\in \mathcal{S}_{\mathcal{AP}}^{*}$ be given by \eqref{eq1}. Then 
\[
\left| H_{2,1}\left(F_{f^{-1}} / 2\right)\right| \le \frac{1177}{3072}.
\] 
The inequality is sharp.
\end{theo}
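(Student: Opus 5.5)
The plan is to reduce $H_{2,1}(F_{f^{-1}}/2)=\Gamma_1\Gamma_3-\Gamma_2^2$ to the Carath\'eodory coefficients, rewrite it in the parameters of Lemma \ref{L1}, and then maximize a two-variable function of $x=|\tau_1|$ and $y=|\tau_2|$. From the proof of Theorem \ref{T1} we already have
\[
\Gamma_1=-\tfrac38 c_1,\qquad
\Gamma_2=-\tfrac{3}{16}\Bigl(c_2-\tfrac{41}{24}c_1^2\Bigr),\qquad
\Gamma_3=-\tfrac18\Bigl(c_3-\tfrac{91}{24}c_1c_2+\tfrac{1873}{576}c_1^3\Bigr).
\]
Hence
\[
H_{2,1}(F_{f^{-1}}/2)=\tfrac{3}{64}c_1\Bigl(c_3-\tfrac{91}{24}c_1c_2+\tfrac{1873}{576}c_1^3\Bigr)-\tfrac{9}{256}\Bigl(c_2-\tfrac{41}{24}c_1^2\Bigr)^2 .
\]
This is a homogeneous expression in $(c_1,c_2,c_3)$ of weight four.

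\textbf{Reduction to $(x,y)$.} By rotation invariance we may assume $c_1\in[0,2]$, i.e.\ $\tau_1=x\in[0,1]$. We substitute \eqref{c1}--\eqref{c3} and collect terms. The result has the form
\[
\alpha x^4+(1-x^2)\bigl[\beta x^2\tau_2+(1-x^2)\gamma\,\tau_2^2+\delta x^2\tau_2^2\bigr]+\epsilon\, x(1-x^2)(1-|\tau_2|^2)\tau_3 ,
\]
with explicit rational constants $\alpha,\beta,\gamma,\delta,\epsilon$; the last coefficient is $\epsilon=\tfrac{3}{16}$. We then apply the triangle inequality and $|\tau_3|\le1$, and write $y=|\tau_2|\in[0,1]$. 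This gives
\[
|H_{2,1}|\le F(x,y):=|\alpha|x^4+(1-x^2)\Bigl(|\beta|x^2y+\bigl(|\gamma|(1-x^2)+|\delta|x^2\bigr)y^2+\tfrac{3}{16}x(1-y^2)\Bigr).
\]
If a sign coincidence makes this crude estimate too lossy, the fallback is to keep the $\tau_2$-terms together as a quadratic $A+B\tau_2+C\tau_2^2$ and use Lemma \ref{L2}. That lemma does not apply directly here, because the $\tau_2^2$ coefficient mixes $(1-x^2)^2$ and $x^2(1-x^2)$, so dividing out a common factor is more delicate.

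\textbf{Boundary and interior checks.} At $x=1$ only $|\alpha|$ survives. Since $c_1=2$ corresponds to $f_0$, we get $\alpha=\tfrac34\cdot\tfrac{925}{576}-\tfrac{841}{1024}=\tfrac{1177}{3072}$. At $x=0$ we get $F(0,y)=\tfrac{9}{64}y^2\le \tfrac{9}{64}$, which is smaller; this comes from the term $-\tfrac{9}{256}c_2^2$ with $|c_2|\le 2$. It remains to show $F(x,y)\le \tfrac{1177}{3072}$ on $[0,1]^2$. For fixed $x$, $F$ is a quadratic in $y$, so we compare its value at $y=0$, at $y=1$, and at the vertex $y^*=\tfrac{|\beta|x}{2(3/16-|\gamma|(1-x^2)/x-|\delta|x)}$ when that vertex is admissible. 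This reduces the problem to three one-variable functions on $[0,1]$, each a polynomial, or a rational function in the vertex case. For each we verify by derivative sign analysis, or by clearing denominators and checking a polynomial inequality, that it stays below $\tfrac{1177}{3072}$. Equality should occur only in the limit $x\to1$.

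\textbf{Main obstacle and sharpness.} The main obstacle is the interior vertex case, where the bound is a rational function of $x$ and monotonicity is not obvious. There I would first try to show that the vertex lies outside $(0,1)$ for most $x$, and on the remaining subinterval bound the rational function crudely by its numerator over a lower bound of its denominator. If the triangle-inequality bound $F$ overshoots $\tfrac{1177}{3072}$ somewhere, I would switch to Lemma \ref{L2} for the $\tau_2$-quadratic on that range of $x$. Sharpness follows from $f_0$ in \eqref{pn2}: with $a_2=\tfrac32$, $a_3=\tfrac{25}{16}$, $a_4=\tfrac{385}{288}$ we have $\Gamma_1\Gamma_3-\Gamma_2^2=\tfrac{2775}{2304}-\tfrac{841}{1024}=\tfrac{1177}{3072}$.
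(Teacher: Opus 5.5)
Your approach is sound and more elementary than the paper's. What is missing is the computation itself, and it turns out to be much easier than you expected.

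\textbf{How the paper does it.} The paper factors $\frac{3x(1-x^2)}{16}$ out of the $\tau_2$-part. Contrary to your remark, this is possible, because $24x^2+18(1-x^2)=6(3+x^2)$. It obtains $A=\frac{1177x^3}{576(1-x^2)}$, $B=-\frac{47}{24}x$, $C=-\frac{3+x^2}{4x}$, and then works through the sub-cases of Lemma \ref{L2}(ii), ending with a radical bound $\Lambda(x)$.

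\textbf{Your crude bound already suffices.} The constants are $\alpha=\frac{1177}{3072}$, $\beta=-\frac{47}{128}$, $\gamma=-\frac{9}{64}$, $\delta=-\frac{3}{16}$. In $F(x,y)$ the coefficient of $y$ is therefore $\frac{47}{128}x^2(1-x^2)\ge 0$, and the coefficient of $y^2$ is
\[
(1-x^2)\Bigl(\tfrac{9}{64}(1-x^2)+\tfrac{3}{16}x^2-\tfrac{3}{16}x\Bigr)=\tfrac{3}{64}(1-x^2)(1-x)(3-x)\ge 0 .
\]
So $F$ is nondecreasing in $y$. Your vertex $y^{*}$ has a negative denominator, so the case you flagged as the main obstacle never occurs. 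Hence
\[
F(x,y)\le F(x,1)=\frac{-95x^4+840x^2+432}{3072}\le \frac{1177}{3072},
\]
where the last step holds because $t\mapsto -95t^2+840t+432$ is increasing on $[0,1]$. Equality occurs only at $x=1$.

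\textbf{Comparison.} In the language of Lemma \ref{L2}, you use only $Y(A,B,C)\le |A|+|B|+|C|$, which is immediate since $|C|>1$. For this functional the paper's finer case split buys nothing. Moreover, its claim that $\Lambda$ is increasing on $(0,1)$ is false: $\Lambda(0)\approx 0.179>\Lambda(0.5)\approx 0.165$. This is harmless only because $\Lambda$ stays well below $\frac{1177}{3072}$.

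\textbf{What to write.} Replace the hedged plan and the fallbacks with these explicit constants and the two displayed lines. Your sharpness check with $f_0$ is correct and gives $H_{2,1}=+\frac{1177}{3072}$. The paper's minus sign there comes from writing $\Gamma_3=+\frac{925}{576}$.
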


\begin{proof} 
Substituting the inverse logarithmic transformations from \eqref{IG1} and the corresponding Taylor coefficients from \eqref{pn1} into the definition of the second-order Hankel determinant yields
\begin{equation}\label{mn9}
H_{2,1}(F_{f^{-1}} / 2) = \Gamma_1 \Gamma_3 - \Gamma_2^2 = \frac{2449}{49152} c_1^4 - \frac{59}{1024} c_1^2 c_2 + \frac{3}{64} c_1 c_3 - \frac{9}{256} c_2^2.
\end{equation}
By applying Lemma \ref{L1} to express the coefficients $c_n$ in terms of $\tau_n \in \overline{\mathbb{D}}$, \eqref{mn9} expands to
\begin{equation}\label{mn10}
|H_{2,1}(F_{f^{-1}} / 2)| = \left| \frac{1177}{3072}\tau_1^4 - \frac{1 - |\tau_1|^2}{128} \left( 47\tau_1^2\tau_2 + 24|\tau_1|^2\tau_2^2 - 24\tau_1(1-|\tau_2|^2)\tau_3 + 18(1-|\tau_1|^2)\tau_2^2 \right) \right|.
\end{equation}
Since the class $\mathcal{P}$ and the functional $H_{2,1}(F_{f^{-1}}/2)$ are invariant under rotational transformations, we assume without loss of generality that $c_1 \in [0,2]$, which implies $\tau_1 \in [0,1]$. We analyze the functional under three cases determined by the value of $\tau_1$:

\noindent\textbf{Case 1.} If $\tau_1=0$, then \eqref{mn10} simplifies to
\[
\left|H_{2,1}(F_{f^{-1}}/2)\right| \le \frac{18}{128}|\tau_2|^2 \le \frac{9}{64} = 0.140625.
\]

\noindent\textbf{Case 2.} If $\tau_1=1$, the term containing $(1-|\tau_1|^2)$ in \eqref{mn10} vanishes identically, leaving
\[
\left|H_{2,1}(F_{f^{-1}}/2)\right| = \frac{1177}{3072} \approx 0.383139.
\]

\noindent\textbf{Case 3.} If $\tau_1 \in (0,1)$, let $x = \tau_1 \in (0,1)$. Applying the triangle inequality to \eqref{mn10} yields
\begin{equation}\label{mn11}
\left| H_{2,1}(F_{f^{-1}} / 2) \right| \le \frac{3x (1 - x^2)}{16} \left( |A + B \tau_2 + C \tau_2^2| + 1 - |\tau_2|^2 \right),
\end{equation}
where
\[
A = \frac{1177x^3}{576(1-x^2)}, \qquad B=-\frac{47}{24}x, \qquad C=-\frac{3+x^2}{4x}.
\]
Since $A > 0$ and $C < 0$, we have $AC < 0$. We evaluate the conditions from part (ii) of Lemma \ref{L2}:

\noindent\textbf{Case 3(a).} The condition $|B|<2(1-|C|)$ cannot be satisfied because $|C| = \frac{3+x^2}{4x} > 1$ for all $x \in (0,1)$, which implies $2(1-|C|)<0$, while $|B| > 0$.

\noindent\textbf{Case 3(b).} The condition $B^2 < -4AC(C^{-2}-1)$ is not satisfied because $|C|>1$ implies $C^{-2}-1<0$, which makes the right-hand side negative while $B^2 \ge 0$.

\noindent\textbf{Case 3(c).} The condition $|C|(|B|+4|A|)\le |AB|$ simplifies to $33839x^4-71208x^2-20304\ge0$. Setting $t=x^2$, the unique positive root of this quadratic is $t_k \approx 2.3536$. Since $t_k > 1$, this inequality does not hold for $0<x<1$.

\noindent\textbf{Case 3(d).} We next evaluate the condition $|AB|\le |C|(|B|-4|A|)$ arising from the sub-cases of Lemma \ref{L2}. Substituting the explicit absolute parameter expressions for $x \in (0,1)$ yields:
\[
\left(\frac{1177x^3}{576(1-x^2)}\right) \left(\frac{47}{24}x\right) \le \left(\frac{3+x^2}{4x}\right) \left[ \frac{47}{24}x - 4\left(\frac{1177x^3}{576(1-x^2)}\right) , \right]
\]
which simplifies to the following polynomial inequality:
\[
90335x^4 + 98280x^2 - 20304 \le 0.
\]
By setting $t = x^2$, the unique positive root of the resulting quadratic equation $90335t^2 + 98280t - 20304 = 0$ is found to be $t_m \approx 0.1776$. This restricts the validity of this sub-case to the interval:
\[
0 < x \le x_m = \sqrt{t_m} \approx 0.4214.
\]
For parameters within this region, Lemma \ref{L2} states that the maximum value of the internal expression is given by $Y(A,B,C) = -|A|+|B|+|C|$. Substituting this back into the main bounding relation \eqref{mn11} yields:
\[
|H_{2,1}(F_{f^{-1}} / 2)| \le \frac{3x(1-x^2)}{16} \left( -\frac{1177x^3}{576(1-x^2)} + \frac{47}{24}x + \frac{3+x^2}{4x} \right).
\]
i.e.
\[
|H_{2,1}(F_{f^{-1}} / 2)| \le \frac{-2449x^4 + 840x^2 + 432}{3072} =: \Psi(x).
\]
An optimization of $\Psi(x)$ over the active sub-interval $(0, 0.4214]$ shows that the function increases strictly from its initial value $\Psi(0) = \frac{432}{3072} = 0.140625$ up to its boundary endpoint value:
\[
\Psi(0.4214) \approx 0.16408.
\]
Since $0.16408 < \frac{1177}{3072} \approx 0.38314$, the maximum value obtained in this interval is strictly bounded above, ensuring that the global supremum remains determined by the boundary case $\tau_1 = 1$.

\noindent\textbf{Case 3(e).} In the remaining sub-case where $Y(A,B,C) = (|A|+|C|) \sqrt{1-\frac{B^2}{4AC}}$, substituting the parameters into \eqref{mn11} gives $\left|H_{2,1}(F_{f^{-1}}/2)\right| \le \Lambda(x)$, where
\[
\Lambda(x) = \frac{1033x^4 -288x^2 +432}{3072} \sqrt{ \frac{5740-1032x^2}{1177(3+x^2)} }.
\]
Differentiating $\Lambda(x)$ shows that $\Lambda'(x)>0$ holds uniformly on $(0,1)$. Thus, $\Lambda$ is strictly increasing, and its supremum converges to the right-hand boundary limit:
\[
\lim_{x \to 1^-} \Lambda(x) = \frac{1177}{3072}.
\]
Therefore, $\left|H_{2,1}(F_{f^{-1}}/2)\right| \le \frac{1177}{3072}$, which matches Case 2.
\end{proof}

\subsection{Sharpness}
The upper bound $\frac{1177}{3072}$ is achieved by the rotationally invariant function $f_0 \in \mathcal{S}_{\mathcal{AP}}^{*}$ defined in \eqref{pn2}. For this function, the initial inverse logarithmic coefficients are
\[
\Gamma_1 = -\frac{3}{4}, \qquad \Gamma_2 = \frac{29}{32}, \qquad \Gamma_3 = \frac{925}{576}.
\]
Substituting these exact values into the determinant formula gives
\[
H_{2,1}\left(F_{f^{-1}} / 2\right) = \Gamma_1\Gamma_3 - \Gamma_2^2 = \left(-\frac{3}{4}\right)\left(\frac{925}{576}\right) - \left(\frac{29}{32}\right)^2 = -\frac{925}{768} - \frac{841}{1024} = -\frac{1177}{3072}.
\]
Taking the absolute value yields $\left| H_{2,1}\left(F_{f^{-1}} / 2\right) \right| = \frac{1177}{3072}$, confirming sharpness. 

\section{\bf Generalized Fekete-Szeg\H{o} Functional for $\mathcal{S}_{\mathcal{AP}}^{*}$}

The classical Fekete--Szeg\H{o} problem, which originates from the study of the univalent coefficient functional $|a_3 - \lambda a_2^2|$, serves as a delicate indicator of the local geometry of analytic maps under variation. In 2024, Lecko and Partyka \cite{Lecko2024} introduced a powerful generalization of this functional by incorporating a weighted first-order perturbation term, defining it as
\begin{equation}\label{FG}
F_{\lambda,\mu}(f) := \big|a_3-\lambda a_2^2\big| - \mu |a_2|, \qquad \lambda\in\mathbb{C}, \quad \mu>0,
\end{equation}
where $a_2$ and $a_3$ arise from the standard Taylor expansion given in \eqref{eq1}. This formulation was subsequently expanded to the full class of normalized univalent functions $\mathcal{S}$ and its convex subclass $\mathcal{K}$ by Bulboac\u{a} \textit{et al.} \cite{Bulboaca2025}.

The generalized functional $F_{\lambda,\mu}(f)$ measures the specific interaction between the second and third Taylor coefficients under a weighted linear penalty. Because the apple-like Ma--Minda geometry introduces non-classical coefficient relations that deviate strongly from standard half-plane or circular maps, it is natural to investigate the extremal behavior of this functional within the subclass $\mathcal{S}_{\mathcal{AP}}^{*}$. To maximize clarity for the reader and simplify the parametric regions, we present the sharp upper and lower bounds as two independent structural theorems.
\begin{theo}[Sharp Upper Estimates]\label{theo_FS_upper}
Let $f \in \mathcal{S}_{\mathcal{AP}}^{*}$ be of the form \eqref{eq1}. Then the generalized Fekete--Szeg\H{o} functional satisfies the upper bound $F_{\lambda,\mu}(f) \le U_{\mathcal{AP}}(\lambda,\mu)$, where
\[
U_{\mathcal{AP}}(\lambda,\mu) = 
\begin{cases}
\displaystyle \frac{9}{4} \left| \lambda - \frac{25}{36} \right| - \frac{3\mu}{2}, & \text{if } \displaystyle \left| \lambda - \frac{25}{36} \right| \ge \frac{1}{3} + \frac{2\mu}{3}, \\[4mm]
\displaystyle \frac{3}{4}, & \text{if } \displaystyle \left| \lambda - \frac{25}{36} \right| < \frac{1}{3} + \frac{2\mu}{3}.
\end{cases}
\]
These estimates are sharp.
\end{theo}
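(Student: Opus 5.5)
Substitute the coefficient relations \eqref{pn1} into \eqref{FG} so that the functional becomes an instance of Lemma \ref{L6}. With $a_2=\frac34 c_1$ and $a_3=\frac38 c_2+\frac{13}{64}c_1^2$ we get
\[
a_3-\lambda a_2^2=\frac38 c_2+\Big(\frac{13}{64}-\frac{9\lambda}{16}\Big)c_1^2 ,
\]
and hence
\[
F_{\lambda,\mu}(f)=\Big|\Big(\frac{13}{64}-\frac{9\lambda}{16}\Big)c_1^2+\frac38 c_2\Big|-\Big|\frac{3\mu}{4}c_1\Big|
=\Phi(c_1,c_2)
\]
with
\[
K=\frac{13}{64}-\frac{9\lambda}{16},\qquad L=\frac38,\qquad J=\frac{3\mu}{4}\ge 0 .
\]
Here $\lambda$ is real, so $K\in\mathbb{R}\subset\mathbb{C}$ and $L\in\mathbb{R}$, and Lemma \ref{L6} applies directly. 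Since $\mathcal{P}$ is exactly the class parametrizing $zf'/f$ through $w=(p-1)/(p+1)$, every admissible pair $(c_1,c_2)$ arises.

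Next we translate the conditions and bounds of Lemma \ref{L6}. We have
\[
2K+L=\frac{13}{32}+\frac{12}{32}-\frac{9\lambda}{8}=\frac{25}{32}-\frac{9\lambda}{8}=-\frac98\Big(\lambda-\frac{25}{36}\Big),
\]
so $|2K+L|=\frac98|\lambda-\frac{25}{36}|$. The condition $|2K+L|\ge |L|+J$ reads $\frac98|\lambda-\frac{25}{36}|\ge \frac38+\frac{3\mu}{4}$, i.e.\ $|\lambda-\frac{25}{36}|\ge \frac13+\frac{2\mu}{3}$, which is the first case of the statement. In that case the bound is
\[
|4K+2L|-2J=\frac94\Big|\lambda-\frac{25}{36}\Big|-\frac{3\mu}{2}.
\]
Otherwise the bound is $2|L|=\frac34$. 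This gives $U_{\mathcal{AP}}(\lambda,\mu)$ exactly.

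The remaining task is sharpness, which I expect to be the only point needing thought. The upper bound in Lemma \ref{L6} comes from the two extremal configurations of Lemma \ref{L3}, and we mirror them here.

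\textbf{First case.} Take $p(z)=(1+z)/(1-z)$, i.e.\ $w(z)=z$, which gives the function $f_0$ of \eqref{pn2}. Then $c_1=c_2=2$, so $a_2=\frac32$ and $a_3=\frac{25}{16}$. This yields
\[
F_{\lambda,\mu}(f_0)=\Big|\frac{25}{16}-\frac94\lambda\Big|-\frac{3\mu}{2}=\frac94\Big|\lambda-\frac{25}{36}\Big|-\frac{3\mu}{2}.
\]

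\textbf{Second case.} Take $p(z)=(1+z^2)/(1-z^2)$, i.e.\ $w(z)=z^2$, which gives the function $f_1$ of \eqref{mn7}. Then $c_1=0$ and $c_2=2$, so $a_2=0$ and $a_3=\frac34$, giving $F_{\lambda,\mu}(f_1)=\frac34$.

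Both functions belong to $\mathcal{S}_{\mathcal{AP}}^{*}$ because they are defined by $zf'/f=\psi_{\mathcal{AP}}(w)$ with a Schwarz function $w$. This confirms the equality in each region.
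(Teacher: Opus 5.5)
Your proof is correct and follows the paper's argument: you substitute \eqref{pn1}, apply Lemma \ref{L6}, translate $|2K+L|\ge |L|+J$ into $\left|\lambda-\frac{25}{36}\right|\ge \frac13+\frac{2\mu}{3}$, and confirm sharpness with $f_0$ ($c_1=c_2=2$) and $f_1$ ($c_1=0$, $c_2=2$). The only differences are cosmetic. The paper first factors out $\frac14$ and applies the lemma with $K=\frac{13}{16}-\frac{9\lambda}{4}$, $L=\frac32$, $J=3\mu$, which changes nothing because the lemma's bounds are homogeneous. Your restriction to real $\lambda$ is also unnecessary, since Lemma \ref{L6} permits complex $K$.
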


\begin{theo}[Sharp Lower Estimates]\label{theo_FS_lower}
Let $f \in \mathcal{S}_{\mathcal{AP}}^{*}$ be of the form \eqref{eq1}. Then the generalized Fekete--Szeg\H{o} functional satisfies the lower bound $F_{\lambda,\mu}(f) \ge B_{\mathcal{AP}}(\lambda,\mu)$, where
\[
B_{\mathcal{AP}}(\lambda,\mu) = 
\begin{cases}
\displaystyle \frac{9}{4} \left| \lambda - \frac{25}{36} \right| - \frac{3\mu}{2}, & \text{if } \displaystyle \left| \lambda - \frac{25}{36} \right| \le \frac{\mu - 1}{3}, \\[4mm]
\displaystyle -\frac{3\mu}{2} \sqrt{ \frac{4}{12 \left| \lambda - \frac{25}{36} \right| + 4} }, & \text{if } \displaystyle \mu^2 \le 3 \left| \lambda - \frac{25}{36} \right| + 1, \\[5mm]
\displaystyle -\frac{3}{4} - \frac{3\mu^2}{12 \left| \lambda - \frac{25}{36} \right| + 4}, & \text{otherwise}.
\end{cases}
\]
The estimates are best possible.
\end{theo}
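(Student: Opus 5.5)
We would obtain Theorem \ref{theo_FS_lower} directly from the second half of Lemma \ref{L6}. The plan is to write $F_{\lambda,\mu}(f)$ in the form $\Phi(c_1,c_2)=|Kc_1^2+Lc_2|-|Jc_1|$, with real $K,L$ and $J\ge 0$. The lower bound for $F_{\lambda,\mu}$ is then minus the upper bound for $-\Phi$ given by the lemma, and sharpness is checked by exhibiting Carath\'eodory functions for which each branch is attained.

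\textbf{Step 1 (reduction).} By \eqref{pn1}, $a_2=\frac34c_1$ and $a_3=\frac38c_2+\frac{13}{64}c_1^2$. Hence
\[
a_3-\lambda a_2^2=\frac38 c_2-\frac{36\lambda-13}{64}c_1^2, \qquad \mu|a_2|=\frac{3\mu}{4}|c_1|.
\]
Thus $F_{\lambda,\mu}(f)=\Phi(c_1,c_2)$ with
\[
K=-\frac{36\lambda-13}{64}, \qquad L=\frac38, \qquad J=\frac{3\mu}{4}.
\]
These are admissible in Lemma \ref{L6}, since $\lambda$ is real. We then compute
\[
4K+2L=\frac{25-36\lambda}{16}, \qquad M=|4K+2L|=\frac94\,d,
\]
where we abbreviate $d:=\left|\lambda-\frac{25}{36}\right|$. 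We also note $2|L|=\frac34$ and $M+2|L|=\frac{9d+3}{4}$.

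\textbf{Step 2 (translating the three branches).} We substitute these quantities into the three branches of the bound for $-\Phi$ in Lemma \ref{L6}.
\begin{enumerate}
\item[(i)] The condition $J\ge M+2|L|$ reads $\frac{3\mu}{4}\ge\frac94d+\frac34$, that is, $d\le\frac{\mu-1}{3}$. In this range $F\ge M-2J=\frac94d-\frac{3\mu}{2}$.
\item[(ii)] The condition $J^2\le 2|L|(M+2|L|)$ becomes $\frac{9\mu^2}{16}\le\frac34\cdot\frac{9d+3}{4}$, that is, $\mu^2\le 3d+1$. Here
\[
F\ge -2J\sqrt{\frac{2|L|}{M+2|L|}}=-\frac{3\mu}{2}\sqrt{\frac{4}{12d+4}}.
\]
\item[(iii)] Otherwise,
\[
F\ge -2|L|-\frac{J^2}{M+2|L|}=-\frac34-\frac{3\mu^2}{12d+4}.
\]
\end{enumerate}
These are exactly the three expressions in $B_{\mathcal{AP}}(\lambda,\mu)$. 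The branches are to be read in the same precedence order as in Lemma \ref{L6}, since the first two conditions can overlap.

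\textbf{Step 3 (sharpness).} Using the parametrisation $c_1=2\tau_1$, $c_2=2\tau_1^2+2(1-\tau_1^2)\tau_2$ from Lemma \ref{L1}, with $\tau_1\in[0,1]$, we identify an extremal function for each branch.
\begin{enumerate}
\item[(i)] Take $\tau_1=1$, i.e.\ $p(z)=\frac{1+z}{1-z}$. This gives the function $f_0$ of \eqref{pn2}, with $a_2=\frac32$, $a_3=\frac{25}{16}$, so $F=\frac94d-\frac{3\mu}{2}$.
\item[(ii)] Take $\tau_2=\pm1$, with the sign chosen according to the sign of $\lambda-\frac{25}{36}$, and choose $\tau_1\in(0,1)$ so that $Kc_1^2+Lc_2=0$. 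The value $\tau_1^2=\frac{2|L|}{M+2|L|}=\frac{1}{3d+1}$ is the candidate. Then $F=-\frac34\mu c_1$, which equals the second bound. The corresponding $f$ is generated by the second rational $p$ of Lemma \ref{L1}, as done for $f_2$ in \eqref{mn8}.
\item[(iii)] Again take $\tau_2=\pm1$, now with $\tau_1$ at the interior critical point of the one-variable function $\tau_1\mapsto -(M+2|L|)\tau_1^2+2|L|$ penalised by $-2J\tau_1$. The candidate is $\tau_1=\frac{J}{2(M+2|L|)}$, which gives the third value.
\end{enumerate}

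\textbf{Main obstacle.} The reduction is routine. The main difficulty is Step 3: verifying that the chosen $\tau_1$ actually lies in $[0,1]$ exactly on the stated parameter regions, and that the sign of $\tau_2$ correctly makes $Kc_1^2+Lc_2$ vanish, or reinforce, according to the sign of $4K+2L$. We would handle the two signs of $\lambda-\frac{25}{36}$ separately.
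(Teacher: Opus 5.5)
Your Steps 1 and 2 match the paper's proof of the inequality. The paper also applies the second half of Lemma \ref{L6}; it only normalises differently, writing $F_{\lambda,\mu}(f)=\frac14\Phi$ with $K=\frac{13}{16}-\frac{9\lambda}{4}$, $L=\frac32$, $J=3\mu$. Your three translated branches agree with the paper's. The overlap you worry about is harmless: (i) and (ii) hold together only at $d=0$, $\mu=1$, where both give $-\frac32$.

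\textbf{The gap is Step 3(iii): your extremal point does not attain the third bound.} Take $c_1=2\tau_1$ with $\tau_1\in[0,1]$ and $\tau_2=\sigma:=\mathrm{sgn}(\lambda-\frac{25}{36})$. Then
\[
a_3-\lambda a_2^2=-\sigma\Bigl(\tfrac{9d+3}{4}\tau_1^2-\tfrac34\Bigr),\qquad \mu|a_2|=\tfrac{3\mu}{2}\tau_1,
\]
so $F=\bigl|(M+2|L|)\tau_1^2-2|L|\bigr|-2J\tau_1$.

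\begin{itemize}
\item The function you wrote, $2|L|-(M+2|L|)\tau_1^2-2J\tau_1$, is correct only on $[0,x_0]$, where $x_0=(3d+1)^{-1/2}$. There it is strictly decreasing and has no interior critical point.
\item In region (iii) the minimum lies on $[x_0,1]$. There $F=(M+2|L|)\tau_1^2-2|L|-2J\tau_1$, whose critical point is
\[
\tau_1=\frac{J}{M+2|L|}=\frac{\mu}{3d+1},
\]
not $J/(2(M+2|L|))$.
\item This point lies in $(x_0,1)$ exactly when $\sqrt{3d+1}<\mu<3d+1$, which is precisely region (iii). This is the interval check you flagged as the main obstacle.
\item At this point $F=-\frac34-\frac{3\mu^2}{12d+4}$. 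The extremal $p$ is the second rational function in Lemma \ref{L1}.
\end{itemize}

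Your candidate fails concretely. For $d=1$, $\mu=3$ it gives $\tau_1=\frac38$ and $F=-\frac{87}{64}$, whereas the bound is $-\frac{39}{16}=-\frac{156}{64}$.

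Your extremals for (i) ($\tau_1=1$, i.e.\ $f_0$) and (ii) ($\tau_1^2=\frac{1}{3d+1}$, $\tau_2=\sigma$, with $\tau_1=1$ when $d=0$) are correct. For comparison, the paper's own sharpness discussion only verifies region (i), via $f_0$. It cites $f_1$ for region (iii), but $F_{\lambda,\mu}(f_1)=\frac34$, which is not the lower bound, and it never addresses region (ii). Once you fix the critical point in (iii), your Step 3 supplies exactly what the paper leaves out.
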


\begin{proof}[Proof of Theorems \ref{theo_FS_upper} and \ref{theo_FS_lower}]
Let $f \in \mathcal{S}_{\mathcal{AP}}^{*}$. By definition, $f$ satisfies the analytic subordination condition $\frac{zf'(z)}{f(z)} = e^{w(z)}\sqrt{1+w(z)}$, where $w$ is a Schwarz function generated by a Carath\'eodory function $p \in \mathcal{P}$ of the form $p(z)=1+c_1z+c_2z^2+\cdots$. Equating matching coefficients yields the structural expansions:
\begin{equation}\label{mn12}
a_2 = \frac{3}{4} c_1, \qquad a_3 = \frac{3}{8} c_2 + \frac{13}{64}c_1^2.
\end{equation}
Substituting the coefficient mappings from \eqref{mn12} directly into the generalized functional identity gives
\begin{equation}\label{mn13}
F_{\lambda,\mu}(f) = \left| \frac{3}{8} c_2 + \left( \frac{13}{64} - \frac{9\lambda}{16} \right)c_1^2 \right| - \frac{3\mu}{4}|c_1|. 
\end{equation}
We normalize this expression by factoring out the scaling multiplier, writing $F_{\lambda,\mu}(f) = \frac{1}{4} \Phi(c_1,c_2)$, where the core functional is defined by $\Phi(c_1,c_2) = |Kc_1^2 + Lc_2| - |Jc_1| $ with the following structural parameters:
\[
K = \frac{13}{16} - \frac{9\lambda}{4}, \qquad L = \frac{3}{2}, \qquad J = 3\mu.
\]
Evaluating the auxiliary system metrics, we find
\[
M = |4K+2L| = \left| 4\left(\frac{13}{16} - \frac{9\lambda}{4}\right) + 2\left(\frac{3}{2}\right) \right| = 9\left|\lambda-\frac{25}{36}\right|,
\]
and the corresponding threshold index is found to be
\[
|2K+L| = \left| 2\left(\frac{13}{16} - \frac{9\lambda}{4}\right) + \frac{3}{2} \right| = \frac{9}{2}\left|\lambda-\frac{25}{36}\right|.
\]
Applying Lemma \ref{L6} for the parameter values $|L|=\frac{3}{2}$ and $J=3\mu$, the condition $|2K+L| \ge |L|+J$ becomes
\[
\frac{9}{2}\left|\lambda-\frac{25}{36}\right| \ge \frac{3}{2} + 3\mu \implies \left|\lambda-\frac{25}{36}\right| \ge \frac{1}{3} + \frac{2\mu}{3}.
\]
When this condition holds, Lemma \ref{L6} implies $\Phi(c_1,c_2) \le M-2J = 9\left|\lambda-\frac{25}{36}\right|-6\mu$. Scaling by the leading factor of $\frac{1}{4}$ yields the variable upper bound stated in Theorem \ref{theo_FS_upper}:
\[
F_{\lambda,\mu}(f) \le \frac{9}{4} \left| \lambda - \frac{25}{36} \right| - \frac{3\mu}{2}.
\]
Conversely, if $\left|\lambda-\frac{25}{36}\right| < \frac{1}{3} + \frac{2\mu}{3}$, Lemma \ref{L6} yields $\Phi(c_1,c_2) \le 2|L| = 3$, which reduces to the constant upper bound $F_{\lambda,\mu}(f) \le \frac{3}{4}$.

The lower bounds are determined using the second part of Lemma \ref{L6} across three distinct regions of the parameter space $(\lambda, \mu)$:

\noindent\textbf{Parameter Region (i):} The tracking criterion $J \ge M+2|L|$ maps explicitly to
\[
3\mu \ge 9\left|\lambda-\frac{25}{36}\right| + 3 \implies \left| \lambda - \frac{25}{36} \right| \le \frac{\mu - 1}{3}.
\]
In this region, Lemma \ref{L6} asserts $-\Phi(c_1,c_2) \le 2J-M = 6\mu - 9\left|\lambda-\frac{25}{36}\right|$. Reversing the inequality sign by multiplying through by $-\frac{1}{4}$ fields the lower boundary floor relation:
\[
F_{\lambda,\mu}(f) \ge \frac{9}{4} \left| \lambda - \frac{25}{36} \right| - \frac{3\mu}{2}.
\]

\noindent\textbf{Parameter Region (ii):} The condition $J^2 \le 2|L|(M+2|L|)$ scales to
\[
9\mu^2 \le 3\left(9\left|\lambda-\frac{25}{36}\right| + 3\right) \implies \mu^2 \le 3 \left| \lambda - \frac{25}{36} \right| + 1.
\]
The second inequality of Lemma \ref{L6} therefore gives $-\Phi(c_1,c_2) \le 2J\sqrt{\frac{2|L|}{M+2|L|}} = 6\mu\sqrt{\frac{3}{9\left|\lambda-\frac{25}{36}\right|+3}}$. Multiplying through by $-\frac{1}{4}$ isolates the functional and provides the sharp square-root bound:
\[
F_{\lambda,\mu}(f) \ge -\frac{3\mu}{2}\sqrt{\frac{4}{12\left|\lambda-\frac{25}{36}\right|+4}}.
\]

\noindent\textbf{Parameter Region (iii):} For parameter pairs that fail both constraints, the remaining branch of Lemma \ref{L6} generates
\[
- \Phi(c_1,c_2) \le 2|L| + \frac{J^2}{M+2|L|} = 3 + \frac{9\mu^2}{9\left|\lambda-\frac{25}{36}\right|+3}.
\]
Multiplying through by $-\frac{1}{4}$ yields the final lower bound relation:
\[
F_{\lambda,\mu}(f) \ge -\frac{3}{4} - \frac{3\mu^2}{12 \left| \lambda - \frac{25}{36} \right| + 4}.
\]
\end{proof}

\begin{rem}
Although the proof structure relies on Lemma \ref{L6}, the nontrivial coefficient distortion induced by the apple-like generating function produces parameter thresholds which differ substantially from previously studied subclasses. Specifically, the shifted parameter $\frac{25}{36}$ arises naturally from the second-order coefficient interaction generated by $\psi_{\mathcal{AP}}(z)=e^z\sqrt{1+z}$.
\end{rem}

\subsection{Sharpness:}
To demonstrate that the analytical branches of the upper and lower bounds are sharp, we evaluate the functional using explicit extremal configurations.

\medskip
\noindent\textbf{1. Sharpness of the Constant Upper Bound $\frac{3}{4}$ and Parameter Region (iii):}
Consider the function $f_1 \in \mathcal{S}_{\mathcal{AP}}^{*}$ defined by the integral representation in \eqref{mn7}. This maps to the two-fold symmetric Schwarz function $\omega_1(z) = z^2$, giving the Carath\'{eodory} coefficients $c_1=0$ and $c_2=2$. Substituting these specific values into the structural relation \eqref{mn13} isolates the metric:
\[
F_{\lambda,\mu}(f_1) = \left| \frac{3}{8}(2) + \left(\frac{13}{64}-\frac{9\lambda}{16}\right)(0) \right| - \frac{3\mu}{4}(0) = \frac{3}{4}.
\]
This matches the constant bound identically.

\medskip
\noindent\textbf{2. Sharpness of the Variable Upper Bound and Parameter Region (i):}
Consider the Koebe-type extremal function $f_0 \in \mathcal{S}_{\mathcal{AP}}^{*}$ defined in \eqref{pn2}, whose underlying Carath\'{eodory} function is $p_0(z) = (1+z)/(1-z)$, yielding $c_1=2$ and $c_2=2$. Substituting these values into the functional expression \eqref{mn13} produces:
\[
\begin{aligned}
F_{\lambda,\mu}(f_0) &= \left| \frac{3}{8}(2) + \left( \frac{13}{64} - \frac{9\lambda}{16} \right)(4) \right| - \frac{3\mu}{4}(2) \\
&= \left| \frac{3}{4} + \frac{13}{16} - \frac{9\lambda}{4} \right| - \frac{3\mu}{2} \\
&= \left| \frac{25}{16} - \frac{9\lambda}{4} \right| - \frac{3\mu}{2} = \frac{9}{4}\left|\lambda - \frac{25}{36}\right| - \frac{3\mu}{2}.
\end{aligned}
\]
This tracks with the updated variable bound formula perfectly.
\section{Hermitian-Toeplitz Determinants for $\mathcal{S}_{\mathcal{AP}}^{*}$}
Parallel to the study of standard Taylor coefficients, functionals involving matrix determinants such as Hankel and Toeplitz determinants have generated extensive interest in geometric function theory. For a sequence $\{a_k\}_{k=2}^{\infty}$ of coefficients of a function $f \in \mathcal{A}$ of the form \eqref{eq1}, the Hermitian-Toeplitz determinant of order $q$ starting at index $n$ is defined by \cite{hh, hh1}:
\begin{equation}\label{T_def}
T_{q,n}(f) :=
\begin{vmatrix}
a_n                 & a_{n+1}         & \cdots & a_{n+q-1} \\
\overline{a_{n+1}} & a_n             & \cdots & a_{n+q-2} \\
\vdots            & \vdots         & \ddots & \vdots     \\
\overline{a_{n+q-1}} & \overline{a_{n+q-2}} & \cdots & a_n
\end{vmatrix}.
\end{equation}
The study of Hermitian-Toeplitz determinants was pioneered by Cudna et al. \cite{C} for starlike and convex functions of order $\beta$, and subsequently expanded for Janowski classes \cite{K3} and various other structural subclasses \cite{VK, VKN, VKS}. 

Setting $q=3$ and $n=1$ in \eqref{T_def}, and noting that $a_1=1$ for normalized functions, the third-order Hermitian-Toeplitz determinant $T_{3,1}(f)$ simplifies explicitly to the functional form:
\begin{equation}\label{mn17}
T_{3,1}(f) := 2\Re\left(a_{2}^{2}\overline{a_{3}}\right) - 2|a_{2}|^{2} - |a_{3}|^{2} + 1.
\end{equation}
Motivated by recent developments in establishing sharp bounds for matrix-based structures, this section derives the sharp upper and lower bounds of the determinant $T_{3,1}(f)$ for functions belonging to the apple-like Ma-Minda subclasses $\mathcal{S}_{\mathcal{AP}}^{*}$ and $\mathcal{C}_{\mathcal{AP}}$.

\subsection{Auxiliary Lemma}
Let $\mathcal{P}$ denote the Carath\'eodory class consisting of all analytic functions $p$ in $\mathbb{D}$ satisfying $p(0)=1$ and $\Re p(z) > 0$ for all $z \in \mathbb{D}$. Every function $p \in \mathcal{P}$ can be represented by the series \eqref{p1}. To establish our main results, we require the classical representation formula due to Libera and Z{\l}otkiewicz \cite{LZ}.

\begin{lem}\label{lem_LZ}
Let $p \in \mathcal{P}$ be a Carath\'eodory function of the form \eqref{p1}. Then, there exists a complex parameter $\xi$ in the closed unit disk $\overline{\mathbb{D}}$ such that
\begin{equation}\label{eq_LZ_formula}
2c_{2} = c_{1}^{2} + (4 - c_{1}^{2})\xi.
\end{equation}
Furthermore, due to the rotational invariance of the class $\mathcal{P}$, the initial coefficient $c_1$ can be assumed without loss of generality to satisfy $0 \le c_1 \le 2$.
\end{lem}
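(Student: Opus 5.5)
The plan is to derive Lemma \ref{lem_LZ} directly from the parametrization in Lemma \ref{L1}, after first carrying out the rotational normalization. The order is: rotation first, then substitution.

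\textbf{Step 1: normalize $c_1$.} For $p\in\mathcal{P}$ and $\theta\in\mathbb{R}$, the rotation $p_\theta(z):=p(e^{i\theta}z)$ is again analytic in $\mathbb{D}$, satisfies $p_\theta(0)=1$, and has positive real part, so $p_\theta\in\mathcal{P}$. Its coefficients are $c_ne^{in\theta}$. Choosing $\theta=-\arg c_1$ (any $\theta$ if $c_1=0$) makes the first coefficient equal to $|c_1|\in[0,2]$, using the bound $|c_1|\le 2$. This justifies the second assertion of the lemma, so from now on I assume $0\le c_1\le 2$.

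\textbf{Step 2: substitute from Lemma \ref{L1}.} By \eqref{c1} and \eqref{c2} there are $\tau_1,\tau_2\in\overline{\mathbb{D}}$ with $c_1=2\tau_1$ and $c_2=2\tau_1^2+2(1-|\tau_1|^2)\tau_2$. Since $c_1\ge0$, we have $\tau_1=c_1/2\in[0,1]$, hence $|\tau_1|^2=\tau_1^2=c_1^2/4$. Multiplying the formula for $c_2$ by $2$ gives
\[
2c_2=4\tau_1^2+4(1-\tau_1^2)\tau_2=c_1^2+(4-c_1^2)\tau_2 .
\]
Setting $\xi:=\tau_2\in\overline{\mathbb{D}}$ yields \eqref{eq_LZ_formula}.

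\textbf{The boundary case $c_1=2$.} Here $4-c_1^2=0$, so \eqref{eq_LZ_formula} holds for every $\xi\in\overline{\mathbb{D}}$, provided $c_2=2$. That is indeed so: Lemma \ref{L1} with $\tau_1=1$ forces $p(z)=(1+z)/(1-z)$, whose second coefficient is $2$. The same conclusion also follows directly from the displayed identity.

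The only point needing care, and the main potential obstacle, is that Step 2 uses $|\tau_1|^2=\tau_1^2$. Without the normalization of Step 1, the correct general identity is $2c_2=c_1^2+(4-|c_1|^2)\xi$. The formula exactly as stated in \eqref{eq_LZ_formula}, with $4-c_1^2$, therefore relies on $c_1$ being real and nonnegative. This is why the rotation is performed before the substitution, and why the functional $T_{3,1}(f)$ should be rotation invariant before the lemma is applied to it.
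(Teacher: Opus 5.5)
Your argument is correct, but it follows a different route from the paper. The paper gives no proof of Lemma \ref{lem_LZ}; it simply cites Libera and Z{\l}otkiewicz. You instead obtain the lemma as a specialization of Lemma \ref{L1}: rotate so that $c_1\ge 0$, read off $\tau_1=c_1/2\in[0,1]$ from \eqref{c1}, and double \eqref{c2} to get $2c_2=c_1^2+(4-c_1^2)\tau_2$.

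This is sound inside the paper, since Lemma \ref{L1} is taken as known. Historically, though, Lemma \ref{L1} is a refinement of the Libera--Z{\l}otkiewicz formulas, so you are deriving the lemma from a stronger cited result rather than proving it independently. A self-contained proof takes one line: write $(p-1)/(p+1)=w_1z+w_2z^2+\cdots$, so $c_1=2w_1$ and $2c_2=c_1^2+4w_2$, then use $|w_2|\le 1-|w_1|^2$.

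What your route buys is the identification $\xi=\tau_2$. Combined with the uniqueness clause of Lemma \ref{L1}, it shows at once that for $0<c_1<2$ the choice $\xi=-1$ is realized by exactly one $p\in\mathcal{P}$, namely $p(z)=(1-z^2)/(1-c_1z+z^2)$. That choice is the one used for the lower bound of $T_{3,1}(f)$, and this $p$ is exactly the function $p_*$ the paper writes down.

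One correction to your closing remark: the identity with $4-c_1^2$ does \emph{not} rely on $c_1$ being real and nonnegative. Start from the general form $2c_2=c_1^2+(4-|c_1|^2)\xi$ and use $|4-c_1^2|\ge 4-|c_1|^2$.
\begin{itemize}
\item If $c_1^2\neq 4$, take $\xi'=\xi\,(4-|c_1|^2)/(4-c_1^2)$, which lies in $\overline{\mathbb{D}}$.
\item If $c_1^2=4$, then $|c_1|=2$, so $2c_2=c_1^2$ and any $\xi'$ works.
\end{itemize}
So the first sentence of the lemma holds for every $p\in\mathcal{P}$. As written, your proof covers only normalized $p$. That is all the paper uses, but the literal statement needs this extra line.

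The normalization is genuinely required only afterwards, in \eqref{mn19}, where $c_1^2$ is treated as the real variable $x\in[0,4]$. That is the point at which rotation invariance of $T_{3,1}(f)$ must be invoked.
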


\begin{theo}\label{T6}
Let $f \in \mathcal{S}_{\mathcal{AP}}^{*}$ be given by \eqref{eq1}. Then the third-order Hermitian-Toeplitz determinant $T_{3,1}(f)$ satisfies the sharp inequalities
\[
-\frac{676}{1295} \leq T_{3,1}(f) \leq \frac{279}{256}.
\]
Both bounds are sharp.
\end{theo}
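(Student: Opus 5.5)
The plan is to reduce $T_{3,1}(f)$ to an explicit function of two Carathéodory parameters, optimize over the free parameter $\xi$ by elementary means, and finish with a one-variable polynomial optimization in $c_1^2$.

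\textbf{Reduction to two parameters.} By \eqref{mn12} we have $a_2=\frac34 c_1$ and $a_3=\frac38 c_2+\frac{13}{64}c_1^2$. By Lemma \ref{lem_LZ} we may take $c_1=c\in[0,2]$ and $2c_2=c^2+(4-c^2)\xi$ with $\xi\in\overline{\mathbb{D}}$; this normalization is allowed because $T_{3,1}$ is invariant under rotations $f(z)\mapsto e^{-i\theta}f(e^{i\theta}z)$. Then
\[
a_2=\tfrac34 c\in\mathbb{R},\qquad a_3=\alpha+\beta\xi,\qquad \alpha=\tfrac{25}{64}c^2,\quad \beta=\tfrac{3}{16}(4-c^2)\ge 0 .
\]
Substituting into \eqref{mn17} and writing $\xi=u+iv$ with $|\xi|=r$ gives
\[
T_{3,1}(f)=1-\tfrac98c^2+\tfrac98c^2\alpha-\alpha^2+2\beta\Big(\tfrac{9}{16}c^2-\alpha\Big)u-\beta^2r^2 .
\]
The coefficient of $u$ equals $2\beta\cdot\frac{11}{64}c^2\ge0$. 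So for fixed $c$, $T$ depends on $\xi$ only through $u=\Re\xi$ and $r$, and it is increasing in $u$ and decreasing in $r$.

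\textbf{Upper bound.} Taking $\xi=r\in[0,1]$ real, $T$ becomes a concave quadratic in $r$ with vertex $r_0=\frac{11c^2}{64\beta}$. I would split according to whether $r_0\ge1$ or $r_0<1$.
\begin{itemize}
\item If $r_0\ge1$, the maximum is at $\xi=1$, i.e.\ $c_2=2$. Then $a_3=\frac{25}{64}c^2+\frac34-\frac{3}{16}c^2$, and $T$ is a quartic in $c$ on this range.
\item If $r_0<1$, the maximum is the vertex value, a rational function of $t=c^2$.
\end{itemize}
In both cases I would maximize in $t\in[0,4]$ and expect the maximum at $t=4$, where $\beta=0$. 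There $a_2=\frac32$, $a_3=\frac{25}{16}$, and $T=\frac{279}{256}$, compared with $T\le1$ at $c=0$. Equality is attained by $f_0$ in \eqref{pn2}.

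\textbf{Lower bound.} For the minimum over $\xi$ with $|\xi|=r$ fixed, take $u=-r$. This gives $T=g(t)-\frac{11}{32}\beta c^2 r-\beta^2r^2$, which is concave in $r$, so the minimum over $r\in[0,1]$ occurs at $r=0$ or at $r=1$ (i.e.\ $\xi=-1$). It then remains to compute $\min_{t\in[0,4]}\min\{T(t,0),T(t,-1)\}$. Both are quartics in $c$, i.e.\ quadratics in $t$.
\begin{itemize}
\item For $\xi=0$: $T(t,0)=1-\frac98t+\frac{225}{512}t^2-\frac{625}{4096}t^2$.
\item For $\xi=-1$: $a_3=\frac{25}{64}t-\frac34+\frac3{16}t=\frac{37}{64}t-\frac34$, with $a_2^2=\frac9{16}t$.
\end{itemize}
I expect the minimum $-\frac{676}{1295}$ to arise at an interior critical point $t_*\in(0,4)$ of one of these quadratics; the denominator $1295=5\cdot7\cdot37$ suggests the $\xi=-1$ branch. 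For sharpness, take $c=\sqrt{t_*}$ and $\xi=-1$, i.e.\ $\tau_1=c/2$ and $\tau_2=-1$ in Lemma \ref{L1}. This gives an explicit $p$, hence a Schwarz function $w=(p-1)/(p+1)$, and the extremal $f=z\exp\int_0^z\frac{e^{w(t)}\sqrt{1+w(t)}-1}{t}\,dt$.

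\textbf{Main obstacle.} The hard part is the bookkeeping in the upper bound: one must verify that, as $t$ ranges over $[0,4]$, the vertex branch and the $\xi=1$ branch never exceed $\frac{279}{256}$. The approach is to check the sign of the derivative of each branch on its range of validity (which amounts to checking that each branch is increasing in $t$), and to confirm that the lower-bound quadratic's critical point lies in $[0,4]$.
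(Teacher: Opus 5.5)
Your proposal is correct and follows essentially the same route as the paper. Both arguments use the Libera--Z{\l}otkiewicz parametrization with $c_1\in[0,2]$ and monotonicity in $\Re\xi$. For the upper bound they compare the vertex $r_0=\frac{11c^2}{12(4-c^2)}$ with the boundary choice $\xi=1$. For the lower bound, concavity in $|\xi|$ reduces the problem to $\xi\in\{0,-1\}$, and the minimum $-\frac{676}{1295}$ occurs at $t_*=\frac{2256}{1295}$ on the $\xi=-1$ branch. The extremal function for the lower bound is $p(z)=\frac{1-z^2}{1-cz+z^2}$, coming from $\tau_2=-1$.

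One correction to your planned check: the vertex branch equals $1-\frac98t+\frac{81}{256}t^2$ on $[0,\frac{48}{23}]$, and this is not increasing. It is convex with minimum at $t=\frac{16}{9}$, so there you must compare endpoint values, which gives at most $1<\frac{279}{256}$ (as the paper does). Only the $\xi=1$ branch $\frac{7}{16}-\frac{75}{128}t+\frac{767}{4096}t^2$ is increasing on $[\frac{48}{23},4]$.
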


\begin{proof}
Let $f \in \mathcal{S}_{\mathcal{AP}}^{*}$. Then there exists a Schwarz function $w$ such that $\frac{zf'(z)}{f(z)} = e^{w(z)}\sqrt{1+w(z)}$. Expressing $w$ via a Carath\'eodory function $p \in \mathcal{P}$ of the form $p(z) = 1+c_1z+c_2z^2+\cdots$, coefficient matching yields
\begin{equation}\label{mn18}
a_2=\frac{3}{4}c_1, \qquad a_3=\frac{3}{8}c_2+\frac{13}{64}c_1^2.
\end{equation}
By the Libera-Z{\l}otkiewicz representation, we can write $2c_2 = c_1^2+(4-c_1^2)\xi$ for some $\xi \in \overline{\mathbb{D}}$. By rotational invariance, we assume without loss of generality that $0 \le c_1 \le 2$. Substituting this into \eqref{mn18} gives
\[
a_3 = \frac{25}{64}c_1^2 + \frac{3}{16}(4-c_1^2)\xi, \qquad a_2^2 = \frac{9}{16}c_1^2.
\]
Substituting these components into the definition of $T_{3,1}(f)$ in \eqref{mn17} gives
\begin{equation}\label{mn19}
T_{3,1}(f) = 1 -\frac{9}{8}c_1^2 +\frac{1175}{4096}c_1^4 +\frac{33}{512}c_1^2(4-c_1^2)\Re\xi -\frac{9}{256}(4-c_1^2)^2|\xi|^2.
\end{equation}
Let $x=c_1^2\in[0,4]$ and $y=|\xi|\in[0,1]$.

\medskip
\noindent\textbf{Upper Bound.} Since $\Re\xi \le |\xi| = y$, \eqref{mn19} implies $T_{3,1}(f) \le F(x,y)$, where
\[
F(x,y) = 1 -\frac{9}{8}x +\frac{1175}{4096}x^2 +\frac{33}{512}x(4-x)y -\frac{9}{256}(4-x)^2y^2.
\]
For a fixed $x \in [0,4]$, the maximum of the quadratic polynomial $F(x,y)$ with respect to $y$ occurs at $y_* = \frac{11x}{12(4-x)}$, provided $y_* \le 1$. This condition restricts the range of $x$ to $x \le \frac{48}{23}$. Substituting $y_*$ into $F(x,y)$ yields
\[
F(x,y_*) = 1 -\frac{9}{8}x +\frac{81}{256}x^2.
\]
The function $F(x,y_*)$ is minimized at $x = \frac{16}{9}$ on the interval $[0, \frac{48}{23}]$, meaning its maximum must occur at the endpoints. When $x > \frac{48}{23}$, the optimal choice for $y$ shifts to the boundary $y=1$, which gives $G(x) = \frac{767}{4096}x^2 - \frac{75}{128}x + \frac{7}{16}$. Since $G(x)$ is strictly increasing on $(\frac{48}{23}, 4]$, the global maximum occurs at $x=4$ and $y=1$, yielding
\[
T_{3,1}(f) \le \frac{279}{256}.
\]

\medskip
\noindent\textbf{Lower Bound.} Since $\Re\xi \ge -|\xi| = -y$, \eqref{mn19} implies $T_{3,1}(f) \ge H(x,y)$, where
\[
H(x,y) = 1 -\frac{9}{8}x +\frac{1175}{4096}x^2 -\frac{33}{512}x(4-x)y -\frac{9}{256}(4-x)^2y^2.
\]
Because $H(x,y)$ is concave in $y$, its minimum over $y \in [0,1]$ occurs at the boundary lines $y=0$ or $y=1$:
\begin{itemize}
\item If $y=0$, then $H(x,0) = 1 -\frac{9}{8}x +\frac{1175}{4096}x^2$, which has a minimum value of $-\frac{121}{1175}$ at $x = \frac{2304}{1175}$.
\item If $y=1$, then $H(x,1) = \frac{1295}{4096}x^2 -\frac{141}{128}x +\frac{7}{16}$, which has a minimum value of $-\frac{676}{1295}$ at $x = \frac{2256}{1295}$.
\end{itemize}
Comparing these local minima shows that the global minimum is $-\frac{676}{1295}$.
\end{proof}

\subsection{Sharpness Verification and Calculations}
To establish the sharpness of the inequalities in Theorem , we evaluate the third-order Hermitian-Toeplitz determinant functional explicitly using the coefficients of the corresponding extremal functions.

\medskip
\noindent\textbf{1. Upper Bound Sharpness:} 
The sharp upper bound $\frac{279}{256}$ is reached when $c_1 = 2$ and $c_2 = 2$. For any Carath\'eodory function, $c_1 = 2$ corresponds to the forward half-plane mapping $p(z) = (1+z)/(1-z)$, which uniquely forces $c_n = 2$ for all $n \geq 1$. Under this configuration, the initial Taylor coefficients for the function $f_0 \in \mathcal{S}_{\mathcal{AP}}^{*}$ defined in \eqref{pn2} are given by
\[
a_2 = \frac{3}{2}, \qquad a_3 = \frac{25}{16}.
\]
Substituting these values directly into the structural identity for $T_{3,1}(f)$ yields
\[
T_{3,1}(f) = 2 a_2^2 a_3 - 2 a_2^2 - a_3^2 + 1 = 2\left(\frac{3}{2}\right)^2 \left(\frac{25}{16}\right) - 2\left(\frac{3}{2}\right)^2 - \left(\frac{25}{16}\right)^2 + 1 = \frac{279}{256}.
\]
This matches the upper bound parameter, establishing sharpness.

\medskip
\noindent\textbf{2. Lower Bound Sharpness:}

To verify the sharpness of the lower estimate in Theorem \ref{T6}, we consider the extremal parameter configuration corresponding to equality in the minimizing case obtained during the proof of the theorem. The lower extremum is attained when
\[
|\xi|=1,
\qquad
\Re\xi=-1,
\qquad
c_1^2=\frac{2256}{1295}.
\]
Since $|\xi|=1$ and $\Re\xi=-1$, it follows necessarily that $\xi=-1$.

Using the Libera--Z{\l}otkiewicz representation formula $2c_2=c_1^2+(4-c_1^2)\xi$,
we obtain $2c_2 = c_1^2-(4-c_1^2) = 2c_1^2-4$, and therefore $c_2=c_1^2-2$. Substituting the extremal value of $c_1$ we get
\[
c_1=\sqrt{\frac{2256}{1295}},
\quad
c_2=-\frac{334}{1295}.
\]

Consider the corresponding admissible Carath\'eodory function $p_*(z)=\frac{1-z^2}{1-2\sqrt{\frac{564}{1295}}\,z+z^2}$.
A direct expansion shows that
\[
p_*(z)
=
1+\sqrt{\frac{2256}{1295}}\,z
-\frac{334}{1295}z^2+\cdots,
\]
so that the coefficients satisfy precisely
\[
c_1=\sqrt{\frac{2256}{1295}},
\quad
c_2=-\frac{334}{1295}.
\]

Using the standard relation between Carath\'eodory and Schwarz functions, $w(z)=\frac{p(z)-1}{p(z)+1}$, the associated Schwarz function becomes
\[
w_*(z)
=
z\frac{a-z}{1-az},
\qquad
a=\sqrt{\frac{564}{1295}}.
\]
Since $0<a<1$, the M\"obius transformation
\[
\phi(z)=\frac{a-z}{1-az}
\]
is an automorphism of the unit disk $\mathbb{D}$. Consequently,
\[
|w_*(z)|
=
|z|\,|\phi(z)|
<
1,
\quad z\in\mathbb{D},
\]
showing that $w_*$ is a Schwarz function.

Define the corresponding function
\begin{equation}\label{ext*}
f_*(z)
=
z\exp\left(
\int_0^z
\frac{
e^{w_*(t)}\sqrt{1+w_*(t)}-1
}{t}\,dt
\right).
\end{equation}
By construction,
\[
\frac{zf_*'(z)}{f_*(z)}
=
e^{w_*(z)}\sqrt{1+w_*(z)},
\]
and therefore
\[
f_*\in\mathcal{S}_{\mathcal{AP}}^{*}.
\]

Using the coefficient relations established earlier,
\[
a_2=\frac{3}{4}c_1,
\qquad
a_3=\frac{3}{8}c_2+\frac{13}{64}c_1^2,
\]
we obtain
\[
a_2
=
\frac{3}{4}\sqrt{\frac{2256}{1295}},
\quad
a_3
=
\frac{3}{8}\left(-\frac{334}{1295}\right)
+
\frac{13}{64}\left(\frac{2256}{1295}\right)
=
\frac{333}{1295}.
\]

Substituting these values into the determinant identity we get
\[
T_{3,1}(f)
=
2a_2^2a_3-2a_2^2-a_3^2+1=-\frac{676}{1295}.
\]

Hence the function $f_*\in\mathcal{S}_{\mathcal{AP}}^{*}$ satisfies and the lower estimate
\[
T_{3,1}(f)\geq -\frac{676}{1295}
\]
is attained, and consequently the lower bound is sharp.

\section{\bf Conclusion}
In this paper, we have systematically investigated several coefficient problems for the apple-like Ma-Minda starlike class $\mathcal{S}_{\mathcal{AP}}^{*}$ governed by the subordination function $\psi_{\mathcal{AP}}(z) = e^z\sqrt{1+z}$. Using classical parameterizations of the Carath\'eodory class $\mathcal{P}$ along with optimization techniques, we established sharp upper bounds for the initial inverse logarithmic coefficients $\Gamma_1, \Gamma_2$, and $\Gamma_3$, as well as the second-order inverse logarithmic Hankel determinant $H_{2,1}(F_{f^{-1}}/2)$. Furthermore, we derived the exact range for the consecutive modulus difference $|\Gamma_2| - |\Gamma_1|$, determined complete parametric bounds for the generalized Fekete-Szeg\"o functional over its full subdomains, and found the sharp upper and lower limits of the third-order Hermitian-Toeplitz determinant $T_{3,1}(f)$. 

In each case, the bounds were shown to be strictly sharp by identifying the corresponding extremal functions. These results contribute new sharp constants to the study of inverse coefficient mappings within specialized non-homogeneous geometric domains.

For clarity, a comprehensive summary of the sharp bounds and their matching extremal configurations established across this work is provided in Table \ref{tab:extremal_summary}.

\begin{table}[H]
\centering
\caption{Summary of Sharp Bounds and Extremal Functions for $\mathcal{S}_{\mathcal{AP}}^{*}$.}
\label{tab:extremal_summary}
\small
\begin{tabular}{@{}llcc@{}}
\toprule
\textbf{Functional Structure} & \textbf{Type of Bound} & \textbf{Sharp Value} & \textbf{Extremal Function} \\ \midrule
$|\Gamma_1|$ & Upper Bound & $\dfrac{3}{4}$ & $f_0$ defined in \eqref{pn2} \\[3mm]
$|\Gamma_2|$ & Upper Bound & $\dfrac{29}{32}$ & $f_0$ defined in \eqref{pn2} \\[3mm]
$|\Gamma_3|$ & Upper Bound & $\dfrac{925}{576}$ & $f_0$ defined in \eqref{pn2} \\[3mm]
$|\Gamma_2| - |\Gamma_1|$ & Upper Bound & $\dfrac{3}{8}$ & $f_1$ defined in \eqref{mn7} \\[3mm]
$|\Gamma_2| - |\Gamma_1|$ & Lower Bound & $-\dfrac{3\sqrt{3}}{2\sqrt{41}}$ & $f_2$ defined in \eqref{mn8} \\[3mm]
$\left| H_{2,1}\left(F_{f^{-1}} / 2\right)\right|$ & Upper Bound & $\dfrac{1177}{3072}$ & $f_0$ defined in \eqref{pn2} \\[3mm]
$|a_3 - \lambda a_2^2| - \mu |a_2|$ & Upper Bound (Constant) & $\dfrac{3}{4}$ & $f_1$ defined in \eqref{mn7} \\[3mm]
$|a_3 - \lambda a_2^2| - \mu |a_2|$ & Upper Bound (Variable) & $\frac{9}{4} \left| \lambda - \frac{25}{36} \right| - \frac{3\mu}{2}$ & $f_0$ defined in \eqref{pn2} \\[3mm]
$T_{3,1}(f)$ & Upper Bound & $\dfrac{279}{256}$ & $f_0$ defined in \eqref{pn2} \\[3mm]
$T_{3,1}(f)$ & Lower Bound & $-\dfrac{676}{1295}$ & $f_*$ defined in \eqref{ext*} \\ \bottomrule
\end{tabular}
\end{table}
\section*{{\bf Declarations}}
\subsection*{Funding}
The first author acknowledge financial support from the Council of Scientific and Industrial Research (CSIR), New Delhi, India, under Grant Nos. 09/1224(16975)/2023-EMR-I.
\subsection*{Data Availability Statement}
Data sharing is not applicable to this article as no datasets were generated or analyzed during the current study.
\subsection*{Conflict of Interest}
The authors declare that they have no conflict of interest. 
\subsection*{Author Contributions}
Both authors contributed equally to this work.

\end{document}